\newtheorem{definition}{Definition}
\newtheorem{theorem}{Theorem}
\newtheorem{lemma}{Lemma}
\newtheorem{remark}{Remark}
\newtheorem{proposition}{Proposition}
\begin{document}
\thispagestyle{empty} \setcounter{page}{1}



\begin{center}
{\Large\bf   Sturm Liouville Equations in the frame of  fractional operators  with  Mittag-Leffler kernels and their discrete versions}

\vskip.20in
  Raziye Mert$^{a}$, Thabet Abdeljawad$^{b}$,  Allan Peterson$^{c}$\\[2mm]
{\footnotesize $^{a}$Mechatronic Engineering Department, University of Turkish Aeronautical Association\\ 06790, Ankara, Turkey\\
$^{b}$Department of Mathematics and General Sciences,
Prince Sultan
University\\ P. O. Box 66833,  11586 Riyadh, Saudi Arabia\\
$^{c}$Department of Mathematics, University of Nebraska-Lincoln, Lincoln, NE, 68588-0130,U.S.A. }

\end{center}

\vskip.2in

{\footnotesize \noindent {\bf Abstract.}

Very recently, some authors have studied new types of fractional derivatives whose kernels are nonsingular. In this article, we study  Sturm-Liouville Equations ($SLEs$) in the frame of fractional operators with Mittag-Leffler kernels. We formulate some Fractional  Sturm-Liouville Problems ($FSLPs$) with the diffferential part containing the left and right sided derivatives. We investigate the self-adjointness, eigenvalue and eigenfunction properties of the corresponding  Fractional Sturm-Liouville Operators ($FSLOs$) by using
fractional integration by parts formulas. The nabla discrete version of our results are also established.

{\bf Keywords.} Fractional Sturm-Liouville problem, $ABR$ and $ABC$ fractional derivatives, $ABR$ and $ABC$ fractional differences, Mittag-Leffler kernel.}


\vskip.1in

\section{Introduction and Preliminaries} \label{s:1}

Fractional calculus has been studied in the last two decades or so. It has been used effectively in the modelling of many problems in various fields of science and engineering. It has reflected successfully the description of the properties of non-local complex systems \cite{Samko, Dumitru}. On the other hand, the discrete fractional calculus was of interest among several mathematicians  \cite{Nabla, CP} and has been developing rapidly. For the sake of finding more fractional operators with different kernels, recently some authors have introduced and studied new non-local derivatives with non-singular kernels and have applied them successfully to some real world problems \cite{Abdon, FCaputo, Losada,Thabet1, ThabetROMP,CF}. The extension to higher order  fractional operators and their Lyapunov type inequalities have been investigated in \cite{JIA Lyapunov, ADE Lyapunov}.
The proposed kernels are non-singular such as those with Mittag-Leffler kernels. The approach in defining such operators is different from the one of classical fractional operators which is through an iterative process of either the usual integration or differentiation. The idea behind this is to define the fractional derivatives first by imposing a non-singular kernel depending on the degree $\alpha$ so that as $\alpha\rightarrow 1$  the usual derivative is obtained and then  by applying a Laplace transform method to find their corresponding fractional integrals. What makes those fractional derivatives with Mittag-Leffler  kernels more interesting is that their corresponding fractional integrals contain Riemann-Liouville  fractional integrals as a part of their structure.  The advantage of such operators is that they enable numerical analysts to develop more efficient algorithms in solving fractional dynamical systems by concentrating only on the coefficients of the differential equations rather than worrying about the singularity of the kernels in the case of the classical fractional operators \cite{CF1,CF2,CF3}.
Later, the discrete counterparts of these  fractional operators were introduced, studied, and their monotonicity properties were analyzed \cite{TQ CAM,Thabet2,ADE Monot,Chaos Monot, DDNS Lyapunov,Th EPJ}.

The $SLEs$, which were investigated a long time ago, have many applications in various areas of science, engineering, and mathematics \cite{Zettl,Boyce}. However, its formulation in the frame of classical fractional calculus has started very recently \cite{Rivero, Klimek}. The classical Sturm-Liouville problem ($SLP$) for a linear  differential equation of second order is a boundary value problem ($BVP$) of the form:

 \begin{eqnarray*}\label{SLE}
  &&-\frac{d}{dt}\left(p(t) \frac{dx}{dt}\right)+q(t)x(t)=\lambda r(t) x(t),~~t \in [a,b],\\
   &&c_1 x(a)+c_2 x^\prime(a)=0,\\
  &&d_1 x(b)+d_2 x^\prime(b)=0,
\end{eqnarray*}
where $p,p^\prime, q,r$ are continuous functions on the interval $[a,b]$ such as $p(t)>0$, $r(t)>0$ on $[a,b].$ The differential equation can be written in the form $$L(x)=\lambda r(t)x,$$
where $L(x)=-[p(t) x^{\prime}]^{\prime}+q(t)x.$ A $\lambda$ for which the above $BVP$ has a nontrivial solution is called an eigenvalue, and the corresponding solution, an eigenfunction.

Motivated by what we have mentioned above, we introduce and analyze fractional $SLEs$ in the  frame of fractional operators with Mittag-Leffler kernels and their discrete counterparts. The corresponding fractional operator $L$ is introduced so that it contains left and right sided fractional operators with Mittag-Leffler kernels which makes possible to apply the suitable integration by parts formulas presented in \cite{Thabet1, Thabet2}.


This article is  organized as follows: In the rest of this section, we recall some basic concepts concerning the classical fractional calculus, classical nabla discrete fractional calculus, fractional operators with Mittag-Leffler kernels, and discrete fractional operators with discrete Mittag-Leffler kernels. In section 2, we state the main results which is divided into two parts. The first part is devoted to $SLEs$ in the frame of fractional operators with  Mittag-Leffler kernels  and the second part to $SLEs$ in the frame of nabla discrete fractional operators with  discrete Mittag-Leffler kernels. Finally, in section 3, we present an open problem for the higher order discrete fractional $SLE$ of order $\alpha \in (1,\frac{3}{2})$.

Below, we first recall some basic concepts from classical fractional calculus.

\begin{definition} (\cite{Podlubny}) The Mittag-Leffler function of one parameter is defined by

\begin{equation*}\label{classical ML}
  E_\alpha(z)= \sum_{k=0}^\infty \frac{z^k}{\Gamma(\alpha k+1)},\,\,\alpha\in \mathbb{C}, Re(\alpha)>0,
\end{equation*}
and the one with two parameters $\alpha$ and $\beta$ by

\begin{equation*}\label{cl ML2}
 E_{\alpha,\beta}(z)= \sum_{k=0}^\infty \frac{z^k}{\Gamma(\alpha k+\beta)},\,\alpha,\beta \in \mathbb{C}, Re(\alpha)>0, Re(\beta)>0,
\end{equation*}
where $E_{\alpha,1}(z)=E_{\alpha}(z)$.
\end{definition}

\begin{definition} (\cite{Kilbas})
The generalized Mittag-Leffler function of three parameters is defined by
\begin{equation*}\label{ML3}
  E^\rho_{\alpha,\beta} ( z)=\sum_{k=0}^\infty
\frac{(\rho)_kz^k} { k! \Gamma(\alpha k+\beta)},\,\alpha,\beta,\rho\in \mathbb{C}, Re(\alpha)>0, Re(\beta)>0, Re(\rho)>0,
\end{equation*}
where $(\rho)_k=\frac{\Gamma(\rho+k)}{\Gamma(\rho)}.$
\end{definition}
\noindent Notice that $(1)_k=k!$ so $ E^1_{\alpha,\beta} (z)=E_{\alpha,\beta}(z)$.

\begin{itemize}
  \item The left  fractional integral of order $\alpha >0$ starting at  $a$ has the following form
   $$(~_{a}I^\alpha f)(t)=\frac{1}{\Gamma(\alpha)}\int_a^t (t-s)^{\alpha-1}f(s)ds.$$
  \item The right fractional integral of order $\alpha >0$ ending at $b$ is defined by
   $$(I_b^\alpha f)(t)=\frac{1}{\Gamma(\alpha)}\int_t^b (s-t)^{\alpha-1}f(s)ds.$$
  \item The left Riemann-Liouville fractional derivative of order $0<\alpha <1$ starting at $a$ is given by
  $$(~_{a}D^\alpha f)(t)=\frac{d}{dt}(~_{a}I^{1-\alpha} f)(t).$$
  \item The right Riemann-Liouville fractional derivative of order $0<\alpha <1$ ending  at $b$ has the form
  $$(D_b^\alpha f)(t)=\frac{-d}{dt}(I_b^{1-\alpha} f)(t).$$
\end{itemize}

\begin{definition} (\cite{Abdon})
Let $f \in H^1(a,b),~ a<b,~\alpha \in [0,1]$. Then the left Caputo fractional derivative with Mittag-Leffler  kernel is defined by
\begin{equation*}
  ~~^{ABC}_{a}D^\alpha f(t)=\frac{B(\alpha)}{1-\alpha} \int_a^t f^\prime(x)E_\alpha\left(\frac{-\alpha}{1-\alpha}(t-x)^\alpha\right)dx,
\end{equation*}

\noindent and  the left Riemann-Liouville one by

\begin{equation*}\label{d2}
  ~~^{ABR}_{a}D^\alpha f(t)=\frac{B(\alpha)}{1-\alpha}\frac{d}{dt} \int_a^t f(x)E_\alpha\left(\frac{-\alpha}{1-\alpha}(t-x)^\alpha\right)dx,
 \end{equation*}
where $B(\alpha)>0$ is a normalization function with $B(0)=B(1)=1.$ In addition, the associated fractional integral is defined by
\begin{equation*}\label{d3}
  ~~^{AB}_{a}I^\alpha f(t)=\frac{1-\alpha}{B(\alpha)}f(t)+\frac{\alpha}{B(\alpha)}~_{a}I^\alpha f(t).
\end{equation*}
\end{definition}


If $f$ is defined on the interval $[a,b]$, then the action of the $Q-$operator is defined as $(Qf)(t)=f(a+b-t)$. From classical fractional calculus, it is known that $(_{a}I^\alpha Qf)(t)=Q(I_b^\alpha f)(t)$ and $(_{a}D^\alpha Qf)(t)=Q (D_b^\alpha f)(t)$. In \cite{Thabet1}, by making use of the $Q-$operator, the authors defined the right versions of the $ABR$ and $ABC$ fractional  derivatives and their corresponding integral as follows:

\begin{definition} (\cite{ Thabet1})
Let $f \in H^1(a,b),~a<b,~\alpha \in [0,1]$. Then the right Caputo fractional derivative  with Mittag-Leffler kernel is defined by
 $$~^{ABC}D^\alpha_b f(t)=-\frac{B(\alpha)}{1-\alpha} \int_t^b f^\prime(x)E_\alpha\left(\frac{-\alpha}{1-\alpha} (x-t)^\alpha\right)dx,$$
 and the right Riemann-Liouville one by
$$~^{ABR}D^\alpha_b f(t)=-\frac{B(\alpha)}{1-\alpha}\frac{d}{dt} \int_t^b f(x)E_\alpha\left(\frac{-\alpha}{1-\alpha}(x-t)^\alpha\right)dx.$$ In addition, the corresponding fractional integral is defined by
\begin{equation*}\label{d3}
 ~^{AB}I_b^\alpha f(t)=\frac{1-\alpha}{B(\alpha)}f(t)+\frac{\alpha}{B(\alpha)}I_{b}^\alpha f(t).
\end{equation*}
\end{definition}



The following function spaces were introduced in \cite{Thabet1} in order to present an integration by parts formula for  $ABR$ fractional derivatives. For $p\geq1$ and $\alpha >0$,

\begin{equation*}\label{n1}
  ~~^{AB}_{a}I^\alpha (L_p)=\{f: f=~^{AB}_{a}I^\alpha \varphi, ~~\varphi \in L_p(a,b)\},
\end{equation*}
and
\begin{equation*}\label{n2}
  ~^{AB}I_b^\alpha (L_p)=\{f: f=~^{AB}I_b^\alpha \phi, ~~\phi \in L_p(a,b)\}.
\end{equation*}

\noindent In \cite{Abdon, Thabet1}, it was shown that the left and right fractional   operators $~~^{ABR}_{a}D^\alpha$ and $~^{ABR}D_b^\alpha$ and their associated fractional integrals $~~^{AB}_{a}I^\alpha$ and $~^{AB}I_b^\alpha$ satisfy

 $$~~^{ABR}_{a}D^\alpha ~~^{AB}_{a}I^\alpha f(t)=f(t),\quad~~~^{ABR}D_b^\alpha ~^{AB}I_b^\alpha f(t)=f(t),$$
 and also
  \begin{equation}\label{satisfy2}
   ^{AB}_{a}I^\alpha ~^{ABR}_{a}D^\alpha  f(t)=f(t),\quad~~~^{AB}I_b^\alpha ~^{ABR}D_b^\alpha  f(t)=f(t).
  \end{equation}

\noindent Hence, from (\ref{satisfy2}), it follows that the function spaces $~~^{AB}_{a}I^\alpha (L_p)$ and $~^{AB}I_b^\alpha (L_p)$ are nonempty.

\begin{theorem} (\cite{Thabet1})(Integration by parts formula for $ABR$ fractional derivatives)\label{Integration by parts}

 Let $\alpha >0$, $p\geq 1,~q \geq 1$, and $\frac{1}{p}+\frac{1}{q}\leq 1+\alpha$ ($p\neq1$ and $q\neq1$ in case $\frac{1}{p}+\frac{1}{q}=1+\alpha$).
\begin{itemize}
  \item If  $\varphi(x) \in L_p(a,b) $ and $\psi(x) \in L_q(a,b)$, then
   \begin{eqnarray*} \label{IBP 1}
       \int_a^b \varphi(x) ~^{AB}_{a}I^\alpha\psi(x)dx= \int_a^b \psi(x) ~^{AB}I_b^\alpha\varphi(x)dx.
           \end{eqnarray*}


  \item If $f(x) \in ~^{AB}I_b^\alpha (L_p) $ and $g(x)\in ~^{AB}_{a}I^\alpha (L_q)$, then  $$\int_a^b f(x) ~^{ABR}_{a}D^\alpha g(x)dx=\int_a^b g(x) ~^{ABR}D_b^\alpha f(x) dx.$$
\end{itemize}
\end{theorem}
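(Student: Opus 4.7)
My plan is to reduce both sides to the classical Riemann--Liouville case by using the explicit decomposition of the $AB$ fractional integrals. Writing out the definitions,
\[
\,^{AB}_{a}I^\alpha \psi(x)=\frac{1-\alpha}{B(\alpha)}\psi(x)+\frac{\alpha}{B(\alpha)}\,_{a}I^\alpha \psi(x),\qquad \,^{AB}I_b^\alpha \varphi(x)=\frac{1-\alpha}{B(\alpha)}\varphi(x)+\frac{\alpha}{B(\alpha)}I_b^\alpha \varphi(x),
\]
so both $\int_a^b\varphi\,^{AB}_{a}I^\alpha\psi\,dx$ and $\int_a^b\psi\,^{AB}I_b^\alpha\varphi\,dx$ split as a common ``pointwise'' term $\frac{1-\alpha}{B(\alpha)}\int_a^b\varphi\psi\,dx$ plus the Riemann--Liouville pieces. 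It then suffices to prove
\[
\int_a^b \varphi(x)\,_{a}I^\alpha \psi(x)\,dx=\int_a^b \psi(x)\,I_b^\alpha \varphi(x)\,dx,
\]
which is the classical fractional integration by parts formula.

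\textbf{Justifying the classical step.} The last identity is a direct application of Fubini's theorem: writing $\,_{a}I^\alpha\psi(x)=\frac{1}{\Gamma(\alpha)}\int_a^x(x-s)^{\alpha-1}\psi(s)\,ds$ and swapping the order of integration turns the left-hand side into $\frac{1}{\Gamma(\alpha)}\int_a^b\psi(s)\int_s^b(x-s)^{\alpha-1}\varphi(x)\,dx\,ds$, which is the right-hand side. The hypotheses $p,q\ge 1$ and $\tfrac1p+\tfrac1q\le 1+\alpha$ (with strict inequality in the degenerate endpoints) are exactly what Hardy--Littlewood--P\'olya require so that the double integral converges absolutely and the swap is legitimate; I would appeal to this classical estimate rather than rederive it.

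\textbf{Plan for the second identity.} This part I would deduce from the first by using the inversion identities \eqref{satisfy2}. Since $g\in\,^{AB}_{a}I^\alpha(L_q)$, there is some $\psi\in L_q(a,b)$ with $g=\,^{AB}_{a}I^\alpha\psi$, and hence $^{ABR}_{a}D^\alpha g=\psi$. Similarly $f=\,^{AB}I_b^\alpha\varphi$ for some $\varphi\in L_p(a,b)$, and $^{ABR}D_b^\alpha f=\varphi$. Substituting,
\[
\int_a^b f(x)\,^{ABR}_{a}D^\alpha g(x)\,dx=\int_a^b \,^{AB}I_b^\alpha\varphi(x)\,\psi(x)\,dx,
\]
\[
\int_a^b g(x)\,^{ABR}D_b^\alpha f(x)\,dx=\int_a^b \,^{AB}_{a}I^\alpha\psi(x)\,\varphi(x)\,dx,
\]
and the first identity (applied to $\varphi,\psi$ in the indicated spaces) equates the two.

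\textbf{Main obstacle.} The genuinely nontrivial ingredient is the absolute convergence needed to invoke Fubini in the classical step; everything else is algebraic manipulation and an appeal to the inverse relations already recorded in the excerpt. I would therefore spend most of the effort verifying the integrability under the exponent condition $\tfrac1p+\tfrac1q\le 1+\alpha$, either by direct H\"older/Young estimates on the kernel $(x-s)^{\alpha-1}$ or by citing the Hardy--Littlewood--P\'olya inequality, after which both parts of the theorem follow cleanly.
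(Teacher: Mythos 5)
Your argument is correct and is essentially the proof of this result in the cited source \cite{Thabet1}: the paper itself only recalls the theorem without proof, and the standard derivation is exactly your reduction---split $~^{AB}_{a}I^\alpha$ and $~^{AB}I_b^\alpha$ into the pointwise part plus $\frac{\alpha}{B(\alpha)}$ times the Riemann--Liouville integral, invoke the classical $L_p$--$L_q$ fractional integration by parts (Fubini under the Hardy--Littlewood exponent condition) for the latter, and obtain the derivative version from the integral version via the inversion identities (\ref{satisfy2}). The one point worth making explicit is that the common term $\frac{1-\alpha}{B(\alpha)}\int_a^b\varphi(x)\psi(x)\,dx$ needs $\varphi\psi\in L_1(a,b)$, i.e.\ $\frac{1}{p}+\frac{1}{q}\le 1$, which is not implied by the stated condition $\frac{1}{p}+\frac{1}{q}\le 1+\alpha$ alone; this hypothesis is implicit in the theorem as quoted from the literature, so your proof inherits rather than creates the issue.
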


From \cite{Abdon}, we recall the  following relation between the left $ABR $ and $ABC$ fractional derivatives as
\begin{equation}\label{relation C and R}
  ~~^{ABC}_{0}D^\alpha f (t)=~^{ABR}_{0}D^\alpha f(t)-\frac{B(\alpha)}{1-\alpha} f(0)E_\alpha \left(-\frac{\alpha}{1-\alpha}t^\alpha\right).
\end{equation}

\noindent Right version of (\ref{relation C and R}) was proved in \cite{Thabet1} by making use of the $Q-$operator as follows:
\begin{equation}\label{right relation C and R}
  ~^{ABC}D_b^\alpha f(t)=~^{ABR}D_b^\alpha f(t)-\frac{B(\alpha)}{1-\alpha} f(b)E_\alpha \left(-\frac{\alpha}{1-\alpha}(b-t)^\alpha\right).
\end{equation}

From \cite{Antonyetal}, recall the left generalized fractional integral operator as
\begin{equation}\label{GIO}
 \textbf{ E}^\rho_{\alpha, \beta, \omega,a^+}\varphi(x)=\int_a^x (x-t)^{\beta-1} E_{\alpha,\beta}^\rho (\omega (x-t)^\alpha)\varphi(t) dt,~~x>a.
\end{equation}

\noindent Analogously, the right generalized fractional integral operator can be defined by

\begin{equation}\label{GIOr}
 \textbf{ E}^\rho_{\alpha, \beta, \omega,b^-}\varphi(x)=\int_x^b (t-x)^{\beta-1} E_{\alpha,\beta}^\rho (\omega (t-x)^\alpha)\varphi(t) dt,~~x<b
\end{equation}
(see also \cite{Thabet1}).


\begin{remark} \label{rem1} By means of (\ref{GIO}) and (\ref{GIOr}),  the $ABR$ and $ABC$ fractional derivatives can be expressed as

\begin{equation*}\label{q}
  ~~^{ABR}_{a}D^\alpha f(t)=\frac{B(\alpha)}{1-\alpha} \frac{d}{dt} \textbf{ E}^1_{\alpha,1, \frac{-\alpha}{1-\alpha},a^+}f(t),
\end{equation*}

\begin{equation*}\label{r}
  ~^{ABR}D_b^\alpha f(t)=\frac{-B(\alpha)}{1-\alpha}\frac{d}{dt} \textbf{ E}^1_{\alpha,1, \frac{-\alpha}{1-\alpha},b^-}f(t),
\end{equation*}
\begin{equation*}\label{t}
  ~~^{ABC}_{a}D^\alpha f(t)=\frac{B(\alpha)}{1-\alpha} \textbf{ E}^1_{\alpha,1, \frac{-\alpha}{1-\alpha},a^+}f^\prime(t),
\end{equation*}

\begin{equation*}\label{u}
  ~^{ABC}D_b^\alpha f(t)=\frac{-B(\alpha)}{1-\alpha} \textbf{ E}^1_{\alpha,1, \frac{-\alpha}{1-\alpha},b^-}f^\prime(t).
\end{equation*}
\end{remark}
\begin{proposition} (\cite{Thabet1}) (Integration by parts formula for $ABC$ fractional derivatives) \label{C by parts}

Let $f,g \in H^1(a,b)$ and $0<\alpha<1.$ Then we have
\begin{itemize}
  \item $\int_a^b g(t)~^{ABC}_{a}D^\alpha f(t)dt= \int_a^b f(t) ~^{ABR}D_b^\alpha g(t) dt+ \frac{B(\alpha)}{1-\alpha} f(t) \textbf{ E}^1_{\alpha,1, \frac{-\alpha}{1-\alpha},b^-}g (t)|_a^b$.
  \item $\int_a^b g(t)~^{ABC}D_b^\alpha f(t)dt= \int_a^b f(t) ~^{ABR}_{a}D^\alpha g(t) dt- \frac{B(\alpha)}{1-\alpha} f(t) \textbf{ E}^1_{\alpha,1, \frac{-\alpha}{1-\alpha},a^+}g (t)|_a^b$.
 \end{itemize}
\end{proposition}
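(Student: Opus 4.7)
The plan is to derive both identities by combining the kernel representations of Remark~\ref{rem1} with a Fubini-type interchange and classical integration by parts. For the first identity, I would begin by using Remark~\ref{rem1} to write $^{ABC}_{a}D^{\alpha}f(t)=\frac{B(\alpha)}{1-\alpha}\,\mathbf{E}^{1}_{\alpha,1,\omega,a^{+}}f'(t)$, where $\omega=-\alpha/(1-\alpha)$. Then I would apply Fubini's theorem on the triangle $\{a\le s\le t\le b\}$ to swap the order of integration, which converts the inner left-endpoint generalized Mittag-Leffler integral against $f'(s)$ into a right-endpoint one against $g(t)$. This step is the analogue for the $\mathbf{E}$-operator of the symmetric adjoint property already established for $^{AB}_{a}I^{\alpha}$ and $^{AB}I_{b}^{\alpha}$ in Theorem~\ref{Integration by parts}, and yields
\[
\int_{a}^{b} g(t)\,{}^{ABC}_{a}D^{\alpha}f(t)\,dt=\frac{B(\alpha)}{1-\alpha}\int_{a}^{b} f'(t)\,\mathbf{E}^{1}_{\alpha,1,\omega,b^{-}}g(t)\,dt.
\]

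Next I would perform ordinary integration by parts with respect to $t$, which produces the boundary term $\frac{B(\alpha)}{1-\alpha}\,f(t)\,\mathbf{E}^{1}_{\alpha,1,\omega,b^{-}}g(t)\big|_{a}^{b}$ and a residual integral of $-f(t)\,\frac{d}{dt}\mathbf{E}^{1}_{\alpha,1,\omega,b^{-}}g(t)$. Invoking the right $ABR$ representation from Remark~\ref{rem1}, namely $\frac{d}{dt}\mathbf{E}^{1}_{\alpha,1,\omega,b^{-}}g(t)=-\tfrac{1-\alpha}{B(\alpha)}\,{}^{ABR}D_{b}^{\alpha}g(t)$, this residual integral collapses to $\int_{a}^{b}f(t)\,{}^{ABR}D_{b}^{\alpha}g(t)\,dt$ after the $\frac{B(\alpha)}{1-\alpha}$ prefactor cancels, which is precisely the first identity.

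The second identity is obtained by the mirror-image argument. I would rewrite $^{ABC}D_{b}^{\alpha}f(t)=-\frac{B(\alpha)}{1-\alpha}\,\mathbf{E}^{1}_{\alpha,1,\omega,b^{-}}f'(t)$, apply Fubini to swap the right-endpoint $\mathbf{E}$-integral into a left-endpoint one, integrate by parts in $t$, and use the left companion $\frac{d}{dt}\mathbf{E}^{1}_{\alpha,1,\omega,a^{+}}g(t)=\tfrac{1-\alpha}{B(\alpha)}\,{}^{ABR}_{a}D^{\alpha}g(t)$ to identify the bulk term. The overall minus sign already present in $^{ABC}D_{b}^{\alpha}f$ is what flips the sign of the boundary contribution in the stated formula. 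Alternatively, the second identity can be deduced from the first by substituting $t\mapsto a+b-t$ and using the $Q$-operator intertwining relations between the left- and right-endpoint $\mathbf{E}$, $^{ABR}$, and $^{ABC}$ operators exploited earlier in the excerpt.

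The only genuine obstacle is justifying the Fubini interchange and the classical integration by parts analytically. Since $f,g\in H^{1}(a,b)$, both $f'$ and $g$ lie in $L^{2}(a,b)$, while the kernel $E_{\alpha}(\omega(t-s)^{\alpha})$ is continuous and bounded on $[a,b]\times[a,b]$; hence the triangular integrand is in $L^{1}$ and Fubini applies. Moreover, by Remark~\ref{rem1} the generalized Mittag-Leffler integrals of $g$ are absolutely continuous on $[a,b]$, so the product-rule integration by parts formula is valid and the boundary evaluations are well defined. Everything after these analytic checks is bookkeeping of constants and signs.
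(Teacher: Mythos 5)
Your argument is exactly the paper's own (alternative) proof: you represent the $ABC$ derivatives via Remark~\ref{rem1}, transpose the generalized Mittag-Leffler operator from $f'$ onto $g$ by a Fubini interchange (which is precisely Lemma~\ref{TR}), perform ordinary integration by parts, and reabsorb the derivative of the transposed $\mathbf{E}$-operator as the $ABR$ derivative, with all signs handled correctly. The only difference is that you spell out the analytic justification for Fubini and the boundary evaluations, which the paper leaves implicit.
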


The proof of Proposition \ref{C by parts} was presented in \cite{Thabet1} by making use of the relations (\ref{relation C and R}) and (\ref{right relation C and R}) and the $ABR$ integration by parts formula in Theorem \ref{Integration by parts}. Below, we present an alternative proof for Proposition \ref{C by parts}  by using an integration by parts formula for the generalized fractional integral operators defined in (\ref{GIO}) and (\ref{GIOr}) and the ordinary integration by parts.
\begin{lemma}\label{TR} Let $\alpha >0$, $p\geq 1,~q \geq 1$, and $\frac{1}{p}+\frac{1}{q}\leq 1+\alpha$ ($p\neq1$  and $q\neq1$ in  case $\frac{1}{p}+\frac{1}{q}=1+\alpha$). If $\varphi(x) \in L_p(a,b) $ and $\psi(x) \in L_q(a,b)$, then
   \begin{eqnarray} \label{gIBP 1}
            \nonumber
             \int_a^b \varphi(t) \textbf{ E}^1_{\alpha,1, \frac{-\alpha}{1-\alpha},a^+}\psi(t)dt = \int_a^b \psi(t) \textbf{ E}^1_{\alpha,1, \frac{-\alpha}{1-\alpha},b^-}\varphi(t)dt.
           \end{eqnarray}

\end{lemma}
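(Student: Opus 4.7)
The plan is to represent both sides of \eqref{gIBP 1} as an integral of the same function over the triangular region $T=\{(s,t):a\le s\le t\le b\}$ and conclude by Fubini's theorem. Using \eqref{GIO} with $\rho=\beta=1$, the left-hand side rewrites as the iterated integral
$$\int_a^b\!\int_a^t \varphi(t)\psi(s)\, E_\alpha\!\left(\tfrac{-\alpha}{1-\alpha}(t-s)^\alpha\right) ds\,dt,$$
and describing $T$ instead as $\{(s,t):a\le s\le b,\;s\le t\le b\}$ lets me identify the inner integral against $\varphi(t)$ with $\mathbf{E}^1_{\alpha,1,\frac{-\alpha}{1-\alpha},b^-}\varphi(s)$ via \eqref{GIOr}, producing the right-hand side.

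The only nontrivial step is to justify the interchange. I would observe that $(s,t)\mapsto E_\alpha\!\left(\tfrac{-\alpha}{1-\alpha}(t-s)^\alpha\right)$ is continuous on the compact set $T$, hence bounded by some constant $M>0$. Since $[a,b]$ has finite Lebesgue measure, the hypothesis $\varphi\in L_p(a,b)$ and $\psi\in L_q(a,b)$ forces $\varphi,\psi\in L_1(a,b)$, and Tonelli's theorem yields
$$\iint_T|\varphi(t)\psi(s)|\,E_\alpha\!\left(\tfrac{-\alpha}{1-\alpha}(t-s)^\alpha\right) ds\,dt \;\le\; M\,\|\varphi\|_{L_1}\,\|\psi\|_{L_1} \;<\; \infty,$$
so the integrand lies in $L_1(T)$ and Fubini's theorem applies, completing the identity.

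The hypothesis $\tfrac{1}{p}+\tfrac{1}{q}\le 1+\alpha$ is stated for consistency with the $ABR$ integration-by-parts formula of Theorem \ref{Integration by parts}, where an embedded $(t-s)^{\alpha-1}$ factor inside the Riemann-Liouville part genuinely forces a Hardy--Littlewood-type restriction; here, with $\beta=1$, the generalized kernel $E_\alpha$ is nonsingular and the argument above goes through directly, so the main obstacle is little more than bookkeeping. As a cross-check, one could alternatively expand the Mittag-Leffler function as its defining power series, interchange summation and integration by uniform convergence on $T$, apply the classical Riemann-Liouville integration-by-parts termwise to each polynomial kernel $(t-s)^{\alpha k}$, and resum, arriving at the same identity.
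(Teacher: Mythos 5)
Your proof is correct and follows essentially the same route as the paper, which simply states that the identity follows from the definitions of the generalized fractional integral operators and an interchange of the order of integration over the triangle $a\le s\le t\le b$. Your added justification of the interchange (continuity, hence boundedness, of the Mittag-Leffler kernel on the compact triangle for $\beta=1$, together with $L_p(a,b)\subset L_1(a,b)$ and Tonelli) is a sound filling-in of the detail the paper leaves implicit.
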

\begin{proof}
The proof follows from the definition of the generalized fractional integral operators and interchanging the order of integration.
\end{proof}
Now, we present the alternative proof of  Proposition \ref{C by parts}:

\begin{proof}
Using Remark \ref{rem1}, Lemma \ref{TR}, and the ordinary integration by parts, we get

\begin{eqnarray*}
 \nonumber
  \int_a^b g(t)^{ABC}_{a}D^\alpha f(t) dt&=& \frac{B(\alpha)}{1-\alpha}\int_a^b g(t) \textbf{ E}^1_{\alpha,1, \frac{-\alpha}{1-\alpha},a^+}f^\prime(t)dt  \\ \nonumber
   &=& \frac{B(\alpha)}{1-\alpha}\int_a^b f^\prime(t)\textbf{ E}^1_{\alpha,1, \frac{-\alpha}{1-\alpha},b^-}g(t)dt \\ \nonumber
  &=&  \frac{B(\alpha)}{1-\alpha}f(t)\textbf{ E}^1_{\alpha,1, \frac{-\alpha}{1-\alpha},b^-}g(t)|_a^b\\ \nonumber
   &-& \frac{B(\alpha)}{1-\alpha}\int_a^b f(t)\frac{d}{dt}\textbf{ E}^1_{\alpha,1, \frac{-\alpha}{1-\alpha},b^-}g(t)dt\\ \nonumber
   &=& \frac{B(\alpha)}{1-\alpha} f(t)\textbf{ E}^1_{\alpha,1, \frac{-\alpha}{1-\alpha},b^-}g(t)|_a^b \\
    &+&\int_a^b f(t)~^{ABR}D_b^\alpha g(t)dt.
\end{eqnarray*}
Similarly, the proof of the second part is as follows:
\begin{eqnarray*}
 \nonumber
  \int_a^b g(t)^{ABC}D_b^\alpha f(t) dt&=& \frac{-B(\alpha)}{1-\alpha}\int_a^b g(t) \textbf{ E}^1_{\alpha,1, \frac{-\alpha}{1-\alpha},b^-}f^\prime(t)dt  \\ \nonumber
   &=& \frac{-B(\alpha)}{1-\alpha}\int_a^b f^\prime(t)\textbf{ E}^1_{\alpha,1, \frac{-\alpha}{1-\alpha},a^+}g(t)dt \\ \nonumber
  &=&  \frac{-B(\alpha)}{1-\alpha}f(t)\textbf{ E}^1_{\alpha,1, \frac{-\alpha}{1-\alpha},a^+}g(t)|_a^b\\ \nonumber
   &+& \frac{B(\alpha)}{1-\alpha}\int_a^b f(t)\frac{d}{dt}\textbf{ E}^1_{\alpha,1, \frac{-\alpha}{1-\alpha},a^+}g(t)dt\\ \nonumber
   &=& \frac{-B(\alpha)}{1-\alpha} f(t)\textbf{ E}^1_{\alpha,1, \frac{-\alpha}{1-\alpha},a^+}g(t)|_a^b \\
    &+&\int_a^b f(t)~^{ABR}_{a}D^\alpha g(t)dt.
\end{eqnarray*}

\end{proof}





\indent
Now, we recall some notations and basic definitions concerning the classical nabla discrete fractional  calculus. For more details, we refer the reader to \cite{Nabla, ThFer, dualCaputo, dualR, CP} and the references cited therein.

The functions we consider will be defined on  sets of the form
$$\mathbb{N}_a=\{a,a+1,a+2,...\},\qquad _{b}\mathbb{N}=\{...,b-2,b-1,b\},$$
where $a,b\in\mathbb{R},$ or a set of the form

$$\mathbb{N}_{a,b}=\{a,a+1,a+2,...,b\},$$
where $b-a$  is a positive integer.

\begin{definition} [\cite{Nabla, CP}] \label{rising}  
\noindent (i) For a natural number $m$ and $t\in\mathbb{R}$, the $m$ rising (ascending) factorial of $t$ is defined by

\begin{equation*}\label{rising 1}
    t^{\overline{m}}= \prod_{k=0}^{m-1}(t+k),~~~t^{\overline{0}}=1.
\end{equation*}

\noindent (ii) For any real number $\alpha$, the (generalized) rising function is defined by
\begin{equation*}\label{alpharising}
 t^{\overline{\alpha}}=\frac{\Gamma(t+\alpha)}{\Gamma(t)},~~~t \in \mathbb{R}\setminus \{...,-2,-1,0\},~~0^{\overline{\alpha}}=0.
\end{equation*}

\end{definition}



\indent

\begin{definition}[\cite{dualCaputo, dualR}]
\label{fractional sums}
For a function $f:\mathbb{N}_a\rightarrow \mathbb{R}$,
the  nabla left fractional sum of order $\alpha>0$ (starting from $a$) is given by
\begin{equation*}
_{a}\nabla^{-\alpha} f(t)=\frac{1}{\Gamma(\alpha)}
\sum_{s=a+1}^t(t-\rho(s))^{\overline{\alpha-1}}f(s),
\quad t \in \mathbb{N}_{a+1}.
\end{equation*}
 The   nabla  right fractional sum of order
$\alpha>0$ (ending at $b$) for $f:~_{b}\mathbb{N}\rightarrow \mathbb{R}$ is defined  by
\begin{equation*}
{\nabla_{b}^{-\alpha}} f(t)
=\frac{1}{\Gamma(\alpha)}
\sum_{s=t}^{b-1}(s-\rho(t))^{\overline{\alpha-1}}f(s)\\
=\frac{1}{\Gamma(\alpha)}
\sum_{s=t}^{b-1}(\sigma(s)-t)^{\overline{\alpha-1}}f(s),
\quad t \in {_{b-1}\mathbb{N}}.
\end{equation*}
\end{definition}

\begin{definition}[\cite{dualCaputo, dualR}]
For a function $f:\mathbb{N}_a\rightarrow \mathbb{R}$, the nabla left  Riemann-Liouville  fractional difference of order $0<\alpha<1$ (starting from $a$) is defined by
\begin{equation*}
_{a}\nabla^{\alpha} f(t)=\nabla  {_{a}}\nabla^{-(1-\alpha)}f(t)
= \nabla\left[\frac{1}{\Gamma(1-\alpha)}
\sum_{s=a+1}^t(t-\rho(s))^{\overline{-\alpha}}f(s)\right],
\quad t \in \mathbb{N}_{a+1},
\end{equation*}
and for $f:~_{b}\mathbb{N}\rightarrow \mathbb{R},$ the nabla right Riemann-Liouville   fractional difference of order $0<\alpha<1$  (ending at $b$)
 by
\begin{equation*}
{\nabla_{b}^{\alpha}} f(t)
= {_{\circleddash}\Delta} {\nabla_{b}^{-(1-\alpha)}}f(t)
=-\Delta\left[\frac{1}{\Gamma(1-\alpha)}
\sum_{s=t}^{b-1}(s-\rho(t))^{\overline{-\alpha}}f(s)\right],
\quad t \in {_{b-1}\mathbb{N}}.
\end{equation*}

\noindent In the above, $\rho$ and $\sigma$ are the backward and forward jump operators, respectively.

\begin{definition}[\cite{dualCaputo, dualR}]
For a function $f:\mathbb{N}_a\rightarrow \mathbb{R}$, the nabla left Caputo  fractional difference of order $0<\alpha<1$ (starting from $a$) is defined by $$(~^{C}~{_{a}}\nabla^\alpha f)(t)=~_{a}\nabla^{-(1-\alpha)} \nabla f(t),~~t \in \mathbb{N}_{a+1},$$
and for $f:~_{b}\mathbb{N}\rightarrow \mathbb{R},$ the nabla right Caputo  fractional difference of order $0<\alpha<1$ (ending at $b$) by

$$(~^{C}\nabla_{b}^\alpha f)(t)= \nabla_{b}^{-(1-\alpha)}~_{\ominus}\Delta f(t),~~t \in ~_{b-1}\mathbb{N}.$$
\end{definition}
\end{definition}

\indent





\begin{definition} \label{nDML} (Nabla Discrete Mittag-Leffler functions)(\cite{dualCaputo, dualR, Thsemi}) For $\lambda \in \mathbb{R}$, $|\lambda|<1,$ and $\alpha, \beta \in \mathbb{C}$ with $Re(\alpha)>0$, the nabla discrete  Mittag-Leffler function is defined by
\begin{equation*}
E_{\overline{\alpha, \beta}}(\lambda,z)= \sum_{k=0}^\infty \lambda^k
\frac{z^{\overline{k\alpha+\beta-1}}} {\Gamma(\alpha
k+\beta)}.
\end{equation*}
For $\beta=1$, it is written that
\begin{equation*} \label{nM22}
E_{\overline{\alpha}} (\lambda, z)\triangleq E_{\overline{\alpha, 1}}(\lambda, z)=  \sum_{k=0}^\infty \lambda^k
\frac{z^{\overline{k\alpha}}} {\Gamma(\alpha
k+1)}.
\end{equation*}

\end{definition}


\begin{definition}\label{DMLf} (\cite{Thabet2}) The nabla discrete generalized Mittag-Leffler function of three parameters $\alpha,~\beta,$ and $\rho$ is defined by

\begin{equation*}\label{dML3}
  E^\rho_{\overline{\alpha,\beta}} (\lambda, z)=\sum_{k=0}^\infty \lambda^k (\rho)_k
\frac{z^{\overline{k\alpha+\beta-1}}} { k! \Gamma(\alpha k+\beta)}.
\end{equation*}

\end{definition}

\noindent Notice that $E^1_{\overline{\alpha,\beta}} (\lambda, z)=E_{\overline{\alpha,\beta}} (\lambda, z)$.




Now, we review some main concepts concerning the nabla discrete fractional differences with discrete Mittag-Leffler kernels following the notations in \cite{Thabet2}.

\begin{definition} (\cite{Thabet2})
Assume $f:{\mathbb N}_{a}\rightarrow{\mathbb R}$ and $\alpha \in (0,1/2)$. Then the nabla discrete left Caputo fractional difference  in the sense of Atangana and Baleanu is defined by
\begin{equation*}
  ~~^{ABC}_{a}\nabla^\alpha f(t)=\frac{B(\alpha)}{1-\alpha} \sum_{s=a+1}^t\nabla f(s)E_{\overline{\alpha}}\left(\frac{ -\alpha}{1-\alpha}, t-\rho(s)\right),\quad t \in \mathbb{N}_{a+1},
\end{equation*}
and in the left Riemann-Liouville sense by
\begin{equation*}\label{d2}
  ~~^{ABR}_{a}\nabla^\alpha f(t)=\frac{B(\alpha)}{1-\alpha}\nabla_{t} \sum_{s=a+1}^t f(s)E_{\overline{\alpha}}\left(\frac{ -\alpha}{1-\alpha}, t-\rho(s)\right),\quad t \in \mathbb{N}_{a+1},
\end{equation*}
where  $B(\alpha)>0$ is a normalization function with $B(0)=B(1)=1.$ In addition, the associated fractional sum is defined by
\begin{equation*}\label{T4}
  ~^{AB}_{a}\nabla^{-\alpha} f(t)=\frac{1-\alpha}{B(\alpha)}f(t)+\frac{\alpha}{B(\alpha)} {~_{a}}\nabla^{-\alpha}f(t),\quad t \in \mathbb{N}_{a+1}.
\end{equation*}
\end{definition}

Similar to the continuous case, for a function $f$  defined on  $\mathbb{N}_{a,b}$, the action of the $Q-$operator is defined as $(Qf)(t)=f(a+b-t)$. From classical discrete fractional calculus, it is known that $(_{a}\nabla^{-\alpha} Qf)(t)=Q(\nabla_b^{-\alpha} f)(t)$ and $(_{a}\nabla^\alpha Qf)(t)=Q (\nabla_b^\alpha f)(t)$. In \cite{Thabet2}, by making use of the $Q-$operator, the authors defined the right versions of the $ABR$ and $ABC$ nabla fractional differences and their corresponding sum as follows:

\begin{definition} (\cite{Thabet2})
Assume $f:~_{b}{\mathbb N}\rightarrow{\mathbb R}$ and $\alpha \in (0,1/2)$. Then the nabla discrete right Riemann-Liouville fractional difference  with discrete Mittag-Leffler kernel is defined by
\begin{equation*}\label{nrd}
   ~~^{ABR}\nabla_b^\alpha f(t)=-\frac{B(\alpha)}{1-\alpha}\Delta_{t} \sum_{s=t}^{b-1} f(s)E_{\overline{\alpha}}\left(\frac{ -\alpha}{1-\alpha}, s-\rho(t)\right),\quad t \in ~_{b-1}\mathbb{N},
\end{equation*}
and the right Caputo one by
\begin{equation*}\label{Crd}
   ~~^{ABC}\nabla_b^\alpha f(t)=-\frac{B(\alpha)}{1-\alpha} \sum_{s=t}^{b-1} \Delta f(s)E_{\overline{\alpha}}\left(\frac{ -\alpha}{1-\alpha}, s-\rho(t) \right),\quad t \in ~_{b-1}\mathbb{N}.
\end{equation*}
In addition, the associated fractional sum is defined by
\begin{equation*}\label{nrd}
  ~~^{AB}\nabla_b^{-\alpha} f(t)=\frac{1-\alpha}{B(\alpha)}f(t)+\frac{\alpha}{B(\alpha)} \nabla^{-\alpha}_{b}f(t),\quad t \in ~_{b-1}\mathbb{N}.
\end{equation*}
\end{definition}





In \cite{Thabet2}, it was shown that  the left and right fractional difference operators $~^{ABR}_{a}\nabla^{\alpha}$ and $~^{ABR}\nabla_b^\alpha$ and their associated fractional sums $~^{AB}_{a}\nabla^{-\alpha}$ and $~^{AB}\nabla_b^{-\alpha}$ satisfy

$$ ~^{ABR}_{a}\nabla^{\alpha}~^{AB}_{a}\nabla^{-\alpha} f(t)=f(t),\quad  ~^{ABR}\nabla_b^\alpha ~^{AB}\nabla_b^{-\alpha }f(t)=f(t),$$
and also
$$~^{AB}_{a}\nabla^{-\alpha}~^{ABR}_{a}\nabla^{\alpha}f(t)=f(t),\quad ~^{AB}\nabla_b^{-\alpha} ~^{ABR}\nabla_b^{\alpha }f(t)=f(t).$$

From \cite{Thabet2}, we also recall the following relation between the left  $ABR$ and $ABC$ nabla fractional differences as
\begin{equation}\label{Tt}
  ~^{ABC}_{a}\nabla^{\alpha} f(t)=~^{ABR}_{a}\nabla^{\alpha} f(t)-f(a)\frac{B(\alpha)}{1-\alpha}E_{\overline{\alpha}}\left(\frac{ -\alpha}{1-\alpha},t-a\right).
\end{equation}




\noindent Right version of (\ref{Tt}) was proved in \cite{Thabet2} by making use of the Q-operator as follows:
\begin{equation}\label{Ttt}
  ~^{ABC}\nabla_b^{\alpha} f(t)=~^{ABR}\nabla_b^{\alpha} f(t)-f(b)\frac{B(\alpha)}{1-\alpha}E_{\overline{\alpha}}\left(\frac{ -\alpha}{1-\alpha},b-t\right).
\end{equation}


\begin{theorem} (\cite{Thabet2}) \label{by parts sums} (Integration by parts formula for $ABR$ fractional sums)

Assume  $f, g:{\mathbb N_{a,b}}\rightarrow{\mathbb R}$ and $\alpha \in (0,1/2)$. Then we have
\begin{eqnarray}
 \nonumber
 \sum_{s=a+1}^{b-1} g(s) ~^{AB}_{a}\nabla ^{-\alpha}f(s)=\sum_{s=a+1}^{b-1} f(s) ~^{AB}\nabla_b ^{-\alpha}g(s). 
 \end{eqnarray}
\end{theorem}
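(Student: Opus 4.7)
The plan is to reduce the claim to the classical nabla fractional sum integration by parts formula by peeling off the local part of the $AB$ operators. Writing
$$^{AB}_{a}\nabla^{-\alpha}f(s) = \tfrac{1-\alpha}{B(\alpha)}f(s) + \tfrac{\alpha}{B(\alpha)}\,{}_{a}\nabla^{-\alpha}f(s), \qquad ^{AB}\nabla_b^{-\alpha}g(s) = \tfrac{1-\alpha}{B(\alpha)}g(s) + \tfrac{\alpha}{B(\alpha)}\,\nabla_b^{-\alpha}g(s),$$
and substituting into both sides of the claimed identity, the contributions proportional to $\tfrac{1-\alpha}{B(\alpha)}$ on each side reduce to $\tfrac{1-\alpha}{B(\alpha)}\sum_{s=a+1}^{b-1}f(s)g(s)$, which are identical and cancel. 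Thus the theorem is equivalent to the purely classical discrete identity
$$\sum_{s=a+1}^{b-1} g(s)\,{}_{a}\nabla^{-\alpha}f(s) = \sum_{s=a+1}^{b-1} f(s)\,\nabla_b^{-\alpha}g(s).$$

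The next step is to expand the inner fractional sums using Definition~\ref{fractional sums}. Writing the left-hand side as a double sum gives
$$\frac{1}{\Gamma(\alpha)} \sum_{s=a+1}^{b-1} g(s) \sum_{r=a+1}^{s} (s-\rho(r))^{\overline{\alpha-1}} f(r),$$
and I would then interchange the order of summation. The domain $\{(s,r): a+1 \le r \le s \le b-1\}$ is recast as $\{(r,s): a+1 \le r \le b-1,\; r \le s \le b-1\}$, yielding
$$\frac{1}{\Gamma(\alpha)} \sum_{r=a+1}^{b-1} f(r) \sum_{s=r}^{b-1} (s-\rho(r))^{\overline{\alpha-1}} g(s).$$
The inner sum equals $\Gamma(\alpha)\,\nabla_b^{-\alpha}g(r)$ by the definition of the right nabla fractional sum, so after relabeling $r$ as $s$ the right-hand side of the desired identity emerges.

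The only real obstacle is the combinatorial bookkeeping of the Fubini-type swap: one must verify that the kernel $(s-\rho(r))^{\overline{\alpha-1}}$ appearing in the expansion of ${}_{a}\nabla^{-\alpha}f$ is literally the same symbol as the kernel appearing in $\nabla_b^{-\alpha}g$ (with the roles of the dummy variables interchanged), and that the summation ranges after swapping match the ones required by the right fractional sum, namely $s$ ranging from $r$ to $b-1$. Both facts are immediate from Definition~\ref{fractional sums}, so no further analytic input is needed and the argument concludes.
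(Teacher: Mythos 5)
Your argument is correct: the local terms $\frac{1-\alpha}{B(\alpha)}\sum f(s)g(s)$ match on both sides, and the remaining Riemann--Liouville part follows exactly as you describe by interchanging the order of summation over $\{(s,r): a+1\le r\le s\le b-1\}$ and recognizing the inner sum as $\Gamma(\alpha)\,\nabla_b^{-\alpha}g(r)$. The paper itself states this theorem as a cited result from \cite{Thabet2} without reproducing a proof, but your route (splitting off the local part and then doing the Fubini-type swap) is the standard one and is the same summation-interchange technique the paper uses to prove the analogous Lemma~\ref{dTR}.
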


\begin{theorem} (\cite{Thabet2})(Integration by parts formula for $ABR$ fractional differences)\label{Rp}

Assume  $f, g:{\mathbb N_{a,b}}\rightarrow{\mathbb R}$ and $\alpha \in (0,1/2)$. Then we have

\begin{eqnarray*}
 \nonumber
 \sum_{s=a+1}^{b-1} f(s) ~^{ABR}_{a}\nabla ^{\alpha} g(s) &=& \sum_{s=a+1}^{b-1} g(s) ~^{ABR}\nabla_b ^{\alpha}f(s).
   \end{eqnarray*}
\end{theorem}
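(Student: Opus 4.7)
The plan is to mimic the strategy used for the analogous continuous statement (the second bullet of Theorem \ref{Integration by parts}) by reducing the $ABR$ fractional difference identity to the already-established integration by parts formula for the $ABR$ fractional sums (Theorem \ref{by parts sums}), using the inversion relations between $~^{ABR}_{a}\nabla^{\alpha}$ and $~^{AB}_{a}\nabla^{-\alpha}$ (and their right-sided counterparts) that were recorded right before Theorem \ref{by parts sums}.

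Concretely, the key observation is that, for any $f,g:\mathbb{N}_{a,b}\to\mathbb{R}$, the inverse relations
\[
~^{AB}_{a}\nabla^{-\alpha}~^{ABR}_{a}\nabla^{\alpha}g(t)=g(t),\qquad ~^{AB}\nabla_b^{-\alpha}~^{ABR}\nabla_b^{\alpha}f(t)=f(t)
\]
let me introduce the auxiliary functions $\psi(t):=~^{ABR}_{a}\nabla^{\alpha}g(t)$ and $\varphi(t):=~^{ABR}\nabla_b^{\alpha}f(t)$, so that
\[
g(t)=~^{AB}_{a}\nabla^{-\alpha}\psi(t),\qquad f(t)=~^{AB}\nabla_b^{-\alpha}\varphi(t).
\]
Then I would substitute these representations into the two sides of the claimed identity: the left-hand side
\[
\sum_{s=a+1}^{b-1} f(s)\,~^{ABR}_{a}\nabla^{\alpha}g(s)=\sum_{s=a+1}^{b-1} \bigl(~^{AB}\nabla_b^{-\alpha}\varphi\bigr)(s)\,\psi(s),
\]
and the right-hand side
\[
\sum_{s=a+1}^{b-1} g(s)\,~^{ABR}\nabla_b^{\alpha}f(s)=\sum_{s=a+1}^{b-1} \bigl(~^{AB}_{a}\nabla^{-\alpha}\psi\bigr)(s)\,\varphi(s).
\]
By Theorem \ref{by parts sums} applied with the roles $\psi$ and $\varphi$ in the slots of $g$ and $f$, these two expressions are equal, which closes the argument.

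The only potentially delicate step is checking that the reduction is actually valid in the given range $\mathbb{N}_{a,b}$—that is, that the functions $\varphi,\psi$ as defined above are legitimate inputs to Theorem \ref{by parts sums}, and that the inversion identities from \cite{Thabet2} apply pointwise on $\{a+1,\dots,b-1\}$ (with the behaviour at the endpoints $a$ and $b$ being consistent with the sum limits). This is really a bookkeeping issue rather than a mathematical obstruction, since the $ABR$ operators and their associated sums were constructed precisely to form such an inversion pair on these index sets; so once that is checked, the substitution argument yields the identity. I expect the bookkeeping about summation limits to be the main place where care is needed, but otherwise no new estimates or manipulations are required beyond an invocation of Theorem \ref{by parts sums}.
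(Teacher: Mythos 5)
The paper imports Theorem \ref{Rp} from \cite{Thabet2} without reproducing a proof, so there is no internal argument to compare against; judged on its own, your proof is correct and is the standard derivation (it is also exactly how the continuous analogue, the second bullet of Theorem \ref{Integration by parts}, is obtained from the first bullet). Setting $\psi=~^{ABR}_{a}\nabla^{\alpha}g$ and $\varphi=~^{ABR}\nabla_b^{\alpha}f$ and invoking the inversion pair reduces the claim to Theorem \ref{by parts sums} applied to $\varphi$ and $\psi$, and the bookkeeping you flag is indeed harmless: for $s\in\mathbb{N}_{a+1,b-1}$ the operator $~^{AB}_{a}\nabla^{-\alpha}$ only reads $\psi$ on $\mathbb{N}_{a+1,s}$ and $~^{AB}\nabla_b^{-\alpha}$ only reads $\varphi$ on $\mathbb{N}_{s,b-1}$, so every evaluation stays inside $\mathbb{N}_{a+1,b-1}$, where the inversion identities hold, and no values at $a$ or $b$ ever enter either sum. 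One point worth making explicit is that, in contrast with the continuous case where the hypothesis $f\in ~^{AB}I_b^\alpha(L_p)$, $g\in~^{AB}_{a}I^\alpha(L_q)$ is required, no such range assumption is needed here precisely because every function on $\mathbb{N}_{a,b}$ lies in the range of the corresponding $AB$ sum; your substitution exploits exactly this. For comparison, the route the paper itself favors for its new results (Remark \ref{rem2} plus Lemma \ref{dTR} and a discrete Abel summation, as in Theorem \ref{th9}) would also prove Theorem \ref{Rp}, but it generates boundary terms whose cancellation must be checked, so your reduction is the cleaner of the two.
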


Before presenting  an integration by parts formula for the left $ABC$ fractional differences, we first recall the discrete versions of the left and right generalized fractional integral operators given in (\ref{GIO}) and (\ref{GIOr}).

\begin{definition} (\cite{Thabet2}) \label{def15}
\begin{itemize}
  \item The discrete (left) generalized fractional integral operator is defined by $$ \textbf{ E}^1_{\overline{\alpha, \beta}, \omega,a^+}\varphi (t)=\sum_{s=a+1}^t (t-\rho(s))^{\overline{\beta-1}} E_{\overline{\alpha,\beta}} (\omega, t-\rho(s)  )\varphi(s) ,~~t\in \mathbb{N}_a.$$
  \item  The discrete (right) generalized fractional integral operator is defined by
  $$\textbf{ E}^1_{\overline{\alpha, \beta}, \omega,b^-}\varphi (t)=\sum_{s=t}^{b-1} (s-\rho(t))^{\overline{\beta-1}} E_{\overline{\alpha,\beta}} (\omega, s-\rho(t))\varphi(s) ,~~t\in ~_{b}\mathbb{N}.$$
\end{itemize}
\end{definition}

\begin{remark} \label{rem2} By means of Definition \ref{def15},  the $ABR$ and $ABC$ fractional differences can be expressed as:

\begin{equation*}\label{dq}
  ~^{ABR}_{a}\nabla^\alpha f(t)=\frac{B(\alpha)}{1-\alpha} \nabla \textbf{ E}^1_{\overline{\alpha,1}, \frac{-\alpha}{1-\alpha},a^+}f(t),
\end{equation*}

\begin{equation*}\label{dr}
  ~^{ABR}\nabla_b^\alpha f(t)=\frac{-B(\alpha)}{1-\alpha}\Delta  \textbf{ E}^1_{\overline{\alpha,1}, \frac{-\alpha}{1-\alpha},b^-}f(t),
\end{equation*}
\begin{equation*}\label{dt}
  ~~^{ABC}_{a}\nabla^\alpha f(t)=\frac{B(\alpha)}{1-\alpha}  \textbf{ E}^1_{\overline{\alpha,1}, \frac{-\alpha}{1-\alpha},a^+}\nabla f (t),
\end{equation*}

\begin{equation*}\label{du}
  ~^{ABC}\nabla_b^{\alpha} f(t)=\frac{-B(\alpha)}{1-\alpha} \textbf{ E}^1_{\overline{\alpha,1}, \frac{-\alpha}{1-\alpha},b^-}\Delta f(t).
\end{equation*}
\end{remark}

\begin{lemma}\label{dTR} Assume  $f, g:{\mathbb N_{a,b}}\rightarrow{\mathbb R}$ and $\alpha\in(0,1/2)$. Then we have
  \begin{eqnarray} \label{gIBP 1}
          \nonumber
            \sum_{s=a+1}^{b-1} f(s)  \textbf{ E}^1_{\overline{\alpha, 1},\frac{-\alpha}{1-\alpha},a^+} g(s) = \sum_{s=a+1}^{b-1} g(s) \textbf{ E}^1_{\overline{\alpha,1},\frac{-\alpha}{1-\alpha},b^-} f(s).
           \end{eqnarray}



\end{lemma}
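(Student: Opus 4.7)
The plan is to mirror the continuous proof from Lemma \ref{TR}: expand both sides via the definition in Definition \ref{def15}, and then swap the order of summation (discrete Fubini). Since we are working with $\beta = 1$, the weight $(s-\rho(r))^{\overline{0}}=1$, so the summand reduces to $E_{\overline{\alpha}}\!\left(\frac{-\alpha}{1-\alpha}, s-\rho(r)\right)$ multiplied by the values of $f$ and $g$, a quantity that is symmetric in its dependence on the two indices once the domain of summation is rewritten.

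Concretely, first I would write the left-hand side, using the definition of the discrete left generalized fractional integral operator, as the iterated sum
\begin{equation*}
\sum_{s=a+1}^{b-1} f(s)\,\textbf{E}^{1}_{\overline{\alpha,1},\frac{-\alpha}{1-\alpha},a^{+}}g(s)
= \sum_{s=a+1}^{b-1}\sum_{r=a+1}^{s} f(s)\,E_{\overline{\alpha}}\!\left(\tfrac{-\alpha}{1-\alpha},\,s-\rho(r)\right) g(r).
\end{equation*}
The region of summation is the triangle $\{(s,r):a+1\le r\le s\le b-1\}$. Then I would interchange the order of summation so that $r$ becomes the outer index and $s$ runs from $r$ to $b-1$, obtaining
\begin{equation*}
\sum_{r=a+1}^{b-1} g(r) \sum_{s=r}^{b-1} f(s)\,E_{\overline{\alpha}}\!\left(\tfrac{-\alpha}{1-\alpha},\,s-\rho(r)\right).
\end{equation*}

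Finally, I would identify the inner sum with $\textbf{E}^{1}_{\overline{\alpha,1},\frac{-\alpha}{1-\alpha},b^{-}}f(r)$ directly from Definition \ref{def15} (again using $(s-\rho(r))^{\overline{0}}=1$), and then rename $r\to s$ to recover the right-hand side. There is essentially no analytical obstacle: the sums are finite because the domain is $\mathbb{N}_{a,b}$, so Fubini is automatic, and no convergence or integrability hypotheses analogous to the $L_p$--$L_q$ assumptions of Lemma \ref{TR} are needed here. The only point to watch is bookkeeping on the limits of summation when swapping the order, and verifying that the kernel $E_{\overline{\alpha}}(\cdot,s-\rho(r))$ is the common factor in both directions; both of these are transparent once the definitions are written out.
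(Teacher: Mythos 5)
Your proposal is correct and follows exactly the paper's own (one-line) argument: expand via Definition \ref{def15} with $\beta=1$ and interchange the order of summation over the triangle $a+1\le r\le s\le b-1$, identifying the inner sum as $\textbf{E}^1_{\overline{\alpha,1},\frac{-\alpha}{1-\alpha},b^-}f(r)$. You have simply written out the bookkeeping that the paper leaves implicit.
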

\begin{proof}
The proof follows from the definition of the generalized fractional sums and interchanging the order of summations.
\end{proof}

In \cite{Thabet2}, the authors presented integration by parts formulas for the $ABC$ nabla fractional differences by using Theorem \ref{Rp}, (\ref{Tt}) and (\ref{Ttt}). In what follows, we will present an integration by parts formula  for the left $ABC$ fractional differences by making use of Remark \ref{rem2}, Lemma \ref{dTR}, and the integration by parts in the ordinary difference calculus.

\begin{theorem}(Integration by parts formula for left $ABC$ fractional differences)\label{th9}

Assume  $f, g:{\mathbb N_{a,b}}\rightarrow{\mathbb R}$ and $\alpha \in (0,1/2)$. Then we have

\begin{equation*}\label{Cp1}
  \sum_{s=a+1}^{b-1}f(s) ~^{ABC}_{a}\nabla^\alpha g(s)=\sum_{s=a+1}^{b-1}g(s-1) ~^{ABR}\nabla_{b}^\alpha f(s-1)+ g(t)\frac{B(\alpha)}{1-\alpha} \textbf{ E}^1_{\overline{\alpha, 1}, \frac{-\alpha}{1-\alpha},b^-}f (t)|_a^{b-1}.
\end{equation*}
\end{theorem}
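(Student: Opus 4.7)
The proof should mirror exactly the alternative proof given for Proposition \ref{C by parts} in the continuous case, but with sums replacing integrals and the nabla summation by parts replacing ordinary integration by parts. I would proceed as follows.

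First, I would use Remark \ref{rem2} to rewrite the left $ABC$ difference inside the sum as
\[
{}^{ABC}_{a}\nabla^\alpha g(s)=\frac{B(\alpha)}{1-\alpha}\,\mathbf{E}^{1}_{\overline{\alpha,1},\frac{-\alpha}{1-\alpha},a^{+}}\nabla g(s),
\]
so the target sum becomes $\frac{B(\alpha)}{1-\alpha}\sum_{s=a+1}^{b-1}f(s)\,\mathbf{E}^{1}_{\overline{\alpha,1},\frac{-\alpha}{1-\alpha},a^{+}}\nabla g(s)$. Next I would apply Lemma \ref{dTR} with $\varphi=f$ and $\psi=\nabla g$ to transfer the generalized operator from $\nabla g$ onto $f$, converting the left operator $\mathbf{E}^{1}_{\overline{\alpha,1},\frac{-\alpha}{1-\alpha},a^{+}}$ into the right operator $\mathbf{E}^{1}_{\overline{\alpha,1},\frac{-\alpha}{1-\alpha},b^{-}}$.

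Then I would apply nabla summation by parts. Using the product rule $\nabla(uv)(s)=u(s)\nabla v(s)+v(s-1)\nabla u(s)$ and the identity $\nabla u(s)=\Delta u(s-1)$, one has
\[
\sum_{s=a+1}^{b-1}u(s)\,\nabla v(s)=\bigl[u(s)v(s)\bigr]_{a}^{b-1}-\sum_{s=a+1}^{b-1}v(s-1)\,\Delta u(s-1),
\]
which I would apply with $u(s)=\mathbf{E}^{1}_{\overline{\alpha,1},\frac{-\alpha}{1-\alpha},b^{-}}f(s)$ and $v(s)=g(s)$. This produces a boundary term of the required form and a residual sum involving $g(s-1)\,\Delta\mathbf{E}^{1}_{\overline{\alpha,1},\frac{-\alpha}{1-\alpha},b^{-}}f(s-1)$.

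Finally, invoking Remark \ref{rem2} once more in the form $\Delta\mathbf{E}^{1}_{\overline{\alpha,1},\frac{-\alpha}{1-\alpha},b^{-}}f(t)=-\frac{1-\alpha}{B(\alpha)}\,{}^{ABR}\nabla_{b}^{\alpha}f(t)$ turns that residual sum into $\sum_{s=a+1}^{b-1}g(s-1)\,{}^{ABR}\nabla_{b}^{\alpha}f(s-1)$, after the factor $\frac{B(\alpha)}{1-\alpha}$ cancels. Collecting the boundary term and this sum yields exactly the claimed identity. The one delicate point I expect is the bookkeeping in the nabla summation by parts: keeping careful track of the shift $s\mapsto s-1$ coming from the $v(s-1)$ factor (equivalently, the identification $\nabla u(s)=\Delta u(s-1)$) is what produces the $g(s-1)$ and ${}^{ABR}\nabla_b^\alpha f(s-1)$ on the right-hand side as stated, rather than evaluating at $s$; getting the summation endpoints to match $[a,b-1]$ in the boundary term is the only nontrivial verification, and it follows directly from the range $s=a+1,\ldots,b-1$ used throughout.
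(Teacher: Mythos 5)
Your proposal is correct and follows exactly the route the paper itself prescribes: immediately before stating Theorem \ref{th9} the authors announce that the proof uses Remark \ref{rem2}, Lemma \ref{dTR}, and ordinary difference summation by parts, mirroring the written-out alternative proof of Proposition \ref{C by parts} in the continuous case. Your bookkeeping of the shift via $\nabla u(s)=\Delta u(s-1)$, which produces the $g(s-1)$ and ${}^{ABR}\nabla_b^{\alpha}f(s-1)$ terms and the boundary evaluation over $[a,b-1]$, is exactly the delicate point and you have handled it correctly.
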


\noindent In the above, it is easy to see that $\textbf{ E}^1_{\overline{\alpha, 1},\frac{-\alpha}{1-\alpha},b^-}f (b-1)=(1-\alpha)f(b-1)$.


\begin{remark} \label{rem3}
The integration by parts formula in Theorem \ref{th9}  can now be stated as follows:
\begin{equation*}\label{Cp11}
  \sum_{s=a+1}^{b}f(s) ~^{ABC}_{a}\nabla^\alpha g(s)=\sum_{s=a+1}^{b}g(s-1) ~^{ABR}\nabla_{b+1}^\alpha f(s-1)+ g(t)\frac{B(\alpha)}{1-\alpha} \textbf{ E}^1_{\overline{\alpha, 1}, \frac{-\alpha}{1-\alpha},b+1^-}f (t)|_a^{b}.
\end{equation*}
\end{remark}

\section{Main Results}


Denoting the $SL$ operator as
\begin{equation*}\label{sadfe1}
 L _{1} x(t)=~~^{ABR}_{a}D^\alpha (p(t)~^{ABR}D^\alpha_b x(t))+q(t)x(t),
\end{equation*}
consider the fractional $SLE$

\begin{equation}
   ~~^{ABR}_{a}D^\alpha(p(t)~^{ABR}D^\alpha_b x(t))+q(t)x(t)=\lambda r(t) x(t),\quad t\in (a,b),\label{cslp}
\end{equation}
where  $\alpha\in(0,1),$ $p(t)>0,$ $ r(t)>0\,\forall t\in [a,b]$, $p, q, r$ are real valued continuous functions on the interval $[a,b].$



\begin{theorem}\label{sadfe1}
The  fractional $SL$ operator $L_1$ is self-adjoint with respect to the inner product
$$<u,v>=\int_{a}^{b} \overline{u(t)}v(t)\,dt.$$
\end{theorem}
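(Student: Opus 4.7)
The plan is to show $\langle L_1 u, v\rangle = \langle u, L_1 v\rangle$ by applying the $ABR$ integration by parts formula (Theorem \ref{Integration by parts}, second bullet) twice, each time moving one of the two fractional derivatives off of $u$ and onto $v$, using crucially that $p$ and $q$ are \emph{real-valued} so that complex conjugation commutes with the fractional operators.

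First I would expand
\[
\langle L_1 u, v\rangle = \int_a^b \overline{{}^{ABR}_{a}D^\alpha\!\bigl(p(t)\,{}^{ABR}D^\alpha_b u(t)\bigr)}\,v(t)\,dt + \int_a^b q(t)\,\overline{u(t)}\,v(t)\,dt,
\]
and use that $p,q$ are real to rewrite the first integrand as ${}^{ABR}_{a}D^\alpha\!\bigl(p(t)\,{}^{ABR}D^\alpha_b \overline{u}(t)\bigr)\,v(t)$. Then I would invoke Theorem \ref{Integration by parts} with $f=v$ and $g = p\cdot {}^{ABR}D^\alpha_b \overline{u}$ to shift the outer left derivative onto $v$, obtaining
\[
\int_a^b v(t)\,{}^{ABR}_{a}D^\alpha\!\bigl(p(t)\,{}^{ABR}D^\alpha_b \overline{u}(t)\bigr)dt = \int_a^b p(t)\,{}^{ABR}D^\alpha_b \overline{u}(t)\cdot {}^{ABR}D^\alpha_b v(t)\,dt.
\]

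Next I would apply Theorem \ref{Integration by parts} a second time, this time reading the formula in the direction $\int g\cdot {}^{ABR}D_b^\alpha f\,dt = \int f\cdot {}^{ABR}_{a}D^\alpha g\,dt$, with $g = p\cdot {}^{ABR}D_b^\alpha v$ and $f = \overline{u}$. This converts the previous integral into
\[
\int_a^b \overline{u}(t)\cdot {}^{ABR}_{a}D^\alpha\!\bigl(p(t)\,{}^{ABR}D^\alpha_b v(t)\bigr)dt,
\]
and, together with the real-potential term $\int_a^b \overline{u}(t)\,q(t)\,v(t)\,dt$, this is exactly $\langle u, L_1 v\rangle$, completing the identity.

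The only real obstacle is verifying the hypotheses of Theorem \ref{Integration by parts}: each application requires the pair of functions to lie in the appropriate spaces ${}^{AB}_{a}I^\alpha(L_p)$ and ${}^{AB}I_b^\alpha(L_q)$, so implicitly the theorem carries the standing assumption that $u,v$, and the composed functions $p\cdot{}^{ABR}D^\alpha_b \overline{u}$ and $p\cdot{}^{ABR}D^\alpha_b v$, belong to those spaces. Continuity of $p$ and the hypothesis $0<\alpha<1$ make the integrability conditions routine, but it is worth flagging this regularity assumption explicitly rather than leaving it implicit. Once that is noted, the rest of the argument is a clean two-step integration by parts plus the observation that $p,q$ are real.
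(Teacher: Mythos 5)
Your proof is correct and is essentially the paper's own argument: both reduce everything, via Theorem \ref{Integration by parts}, to the symmetric intermediate integral $\int_a^b p(t)\,{}^{ABR}D^\alpha_b \overline{u}(t)\,{}^{ABR}D^\alpha_b v(t)\,dt$, the only cosmetic difference being that the paper applies the formula once to each of the two terms of $v\overline{L_1u}-\overline{u}L_1v$ and cancels, while you chain the two applications to carry $\langle L_1u,v\rangle$ directly into $\langle u,L_1v\rangle$. Your explicit flagging of the membership hypotheses in ${}^{AB}_{a}I^\alpha(L_p)$ and ${}^{AB}I_b^\alpha(L_q)$ is a point the paper leaves implicit, but it does not change the substance of the proof.
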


\begin{proof}
We have
\begin{eqnarray}
v(t)\overline{L_1 u(t)}&=&v(t)~~^{ABR}_{a}D^\alpha(p(t)~^{ABR}D^\alpha_b \overline{u}(t))+q(t)\overline{u}(t)v(t)\label{eqn21}\\
\overline{u}(t)L_1 v(t)&=&\overline{u}(t)~~^{ABR}_{a}D^\alpha(p(t)~^{ABR}D^\alpha_b v(t))+q(t)\overline{u}(t)v(t)\label{eqn11}.
\end{eqnarray}


 \noindent Subtracting (\ref{eqn11}) from (\ref{eqn21}), we have

\begin{equation*}
v(t)\overline{L_1 u(t)}-\overline{u}(t)L_1 v(t)=v(t)~~^{ABR}_{a}D^\alpha(p(t)~^{ABR}D^\alpha_b \overline{u}(t))-\overline{u}(t)~~^{ABR}_{a}D^\alpha(p(t)~^{ABR}D^\alpha_b v(t)). \label{eqn31}
\end{equation*}


\noindent Integrating  from $a$ to $b$,  we get

\begin{eqnarray*}
& &\int_{a}^{b} \left(v(t)\overline{L_1 u(t)}-\overline{u}(t)L_1 v(t)\right)\,dt=\nonumber\\  & &\quad \int_{a}^{b}\left(v(t)~~^{ABR}_{a}D^\alpha(p(t)~^{ABR}D^\alpha_b \overline{u}(t))-\overline{u}(t)~~^{ABR}_{a}D^\alpha(p(t)~^{ABR}D^\alpha_b v(t))\right)\,dt. \label{eqn31}
\end{eqnarray*}


\noindent Now, by applying the integration by parts formula in Theorem \ref{Integration by parts}, we obtain

\begin{eqnarray*}
 \int_{a}^{b} \left(v(t)\overline{L_1 u(t)}-\overline{u}(t)L_1 v(t)\right)\,dt& = & \int_{a}^{b} p(t)~^{ABR}D^\alpha_b \overline{u}(t) ~~^{ABR}D_b^\alpha v(t)\,dt\\ & &-
\int_{a}^{b} p(t)~^{ABR}D^\alpha_b v(t) ~~^{ABR}D_b^\alpha \overline{u}(t)\,dt\\ &=& 0.
\end{eqnarray*}

\noindent Hence, $<L_1u,v>=<u,L_1v>.$ That is, $L_1$ is self-adjoint.
\end{proof}

\begin{theorem}\label{selfadjoint2}
The  eigenvalues of the  $SLE$ (\ref{cslp}) are real.
\end{theorem}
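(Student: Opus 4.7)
The plan is to run the standard Sturm--Liouville reality argument, exploiting the self-adjointness of $L_1$ already established in Theorem \ref{sadfe1}. Suppose $\lambda$ is an eigenvalue of (\ref{cslp}) with a nontrivial eigenfunction $x(t)$, so that $L_1 x(t) = \lambda r(t) x(t)$ on $(a,b)$. Since the coefficients $p, q, r$ are real-valued, taking complex conjugates gives $L_1 \overline{x}(t) = \overline{\lambda}\, r(t)\, \overline{x}(t)$, so $\overline{x}$ is an eigenfunction for $\overline{\lambda}$.

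The key computation is to pair the eigenvalue equation against $x$ itself in the inner product. First I would evaluate
\[
\langle L_1 x, x\rangle = \int_a^b \overline{L_1 x(t)}\, x(t)\, dt = \overline{\lambda}\int_a^b r(t)|x(t)|^2\, dt,
\]
and symmetrically
\[
\langle x, L_1 x\rangle = \int_a^b \overline{x(t)}\, L_1 x(t)\, dt = \lambda \int_a^b r(t)|x(t)|^2\, dt.
\]
By Theorem \ref{sadfe1}, $\langle L_1 x, x\rangle = \langle x, L_1 x\rangle$, which yields
\[
(\lambda - \overline{\lambda}) \int_a^b r(t)|x(t)|^2\, dt = 0.
\]

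To conclude $\lambda = \overline{\lambda}$, I would observe that $r(t) > 0$ on $[a,b]$ and $|x(t)|^2 \ge 0$; since $x$ is a nontrivial eigenfunction (and continuous, as inherited from its membership in the relevant function spaces where the $ABR$ derivatives act), the integral $\int_a^b r(t)|x(t)|^2\,dt$ is strictly positive. Therefore $\lambda - \overline{\lambda} = 0$, i.e., $\lambda \in \mathbb{R}$.

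The only real subtlety is the nondegeneracy step, i.e., ruling out $\int_a^b r|x|^2\,dt = 0$. This is where one must invoke some regularity/nontriviality of the eigenfunction: it suffices that $x$ is continuous and not identically zero on $[a,b]$, combined with $r>0$. Everything else is a mechanical application of self-adjointness, so this is essentially the textbook argument transplanted verbatim into the Mittag-Leffler-kernel setting, made legitimate by Theorem \ref{sadfe1}.
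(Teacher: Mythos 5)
Your proposal is correct and follows essentially the same route as the paper: both reduce the claim to the identity $(\lambda-\overline{\lambda})\int_a^b r(t)|x(t)|^2\,dt=0$ via the self-adjointness of $L_1$ established in Theorem \ref{sadfe1}, and then use $r>0$ and the nontriviality of $x$. The only cosmetic difference is that you pair the eigenvalue equation directly in the inner product, whereas the paper multiplies the equations for $x$ and $\overline{x}$ crosswise, subtracts, and integrates before invoking self-adjointness.
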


\begin{proof}

Assume that $\lambda$ is the eigenvalue for (\ref{cslp})  corresponding to eigenfunction $x.$ Then $x$ and its complex conjugate $\overline{x}$ satisfy
\begin{equation}\label{xxxx}
 L _{1} x(t)=\lambda r(t)x(t),
 \end{equation}
and
 \begin{equation}\label{xxx}
 L _{1} \overline{x}(t)=\overline{\lambda} r(t)\overline{x}(t),
 \end{equation}
 respectively.
 We multiply (\ref{xxxx}) by $\overline{x}(t)$ and  (\ref{xxx}) by $x(t),$ respectively, and subtract to obtain
 \begin{equation*}
 (\overline{\lambda}-\lambda)r(t)x(t)\overline{x}(t)=x(t)L _{1}\overline{x}(t)-\overline{x}(t) L _{1} x(t).
 \end{equation*}

\noindent Integrating from $a$ to $b,$ and by using the fact that $L_1$ is self-adjoint, we get

\begin{equation*}
(\overline{\lambda}-\lambda)\int_{a}^{b}r(t)|x(t)|^2\,dt=0,
\end{equation*}
and since $x$ is a nontrivial solution and $r(t)>0,$  we conclude that $\lambda=\overline{\lambda}.$
\end{proof}

\begin{theorem}\label{selfadjoint3}
The eigenfunctions, corresponding to distinct eigenvalues of the $SLE$ (\ref{cslp}) are orthogonal  with respect to the weight function $r$ on $[a,b]$ that is

$$<x_{\lambda_1},x_{\lambda_2}>=\int_{a}^{b} r(t)x_{\lambda_1}(t)x_{\lambda_2}(t)\,dt=0,\quad \lambda_1 \ne \lambda_2,$$
when the functions $x_{\lambda_i}$  correspond to eigenvalues $\lambda_i,\,i=1,2.$
\end{theorem}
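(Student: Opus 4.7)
The plan is to derive the orthogonality as a direct consequence of the self-adjointness of $L_1$ (Theorem~\ref{sadfe1}) together with the reality of the eigenvalues (Theorem~\ref{selfadjoint2}). First I would write down the two eigenrelations
\[
L_1 x_{\lambda_1}(t) = \lambda_1 r(t)\, x_{\lambda_1}(t), \qquad L_1 x_{\lambda_2}(t) = \lambda_2 r(t)\, x_{\lambda_2}(t),
\]
and note that Theorem~\ref{selfadjoint2} guarantees $\lambda_1,\lambda_2 \in \mathbb{R}$, so that the eigenfunctions $x_{\lambda_i}$ can be taken real-valued. This is consistent with the statement of the theorem, in which the weighted inner product is written without a complex conjugate bar.

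Next I would substitute the eigenrelations into both sides of the self-adjointness identity $\langle L_1 x_{\lambda_1}, x_{\lambda_2}\rangle = \langle x_{\lambda_1}, L_1 x_{\lambda_2}\rangle$. Using that $r$ and $\lambda_1$ are real, the left side becomes $\lambda_1 \int_a^b r(t)\, \overline{x_{\lambda_1}(t)}\, x_{\lambda_2}(t)\, dt$, while the right side becomes $\lambda_2 \int_a^b r(t)\, \overline{x_{\lambda_1}(t)}\, x_{\lambda_2}(t)\, dt$. Equating the two and transposing yields
\[
(\lambda_1 - \lambda_2) \int_a^b r(t)\, \overline{x_{\lambda_1}(t)}\, x_{\lambda_2}(t)\, dt = 0.
\]

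Finally, since $\lambda_1 \neq \lambda_2$ by hypothesis, dividing through gives $\int_a^b r(t)\, \overline{x_{\lambda_1}(t)}\, x_{\lambda_2}(t)\, dt = 0$, and taking the eigenfunctions to be real-valued produces the stated orthogonality $\int_a^b r(t)\, x_{\lambda_1}(t)\, x_{\lambda_2}(t)\, dt = 0$. There is no real obstacle: the substantive work has already been carried out in establishing the self-adjointness of $L_1$ via the $ABR$ integration-by-parts formula (Theorem~\ref{Integration by parts}), so the remaining argument is a brief algebraic manipulation. The only minor subtlety is the passage between the conjugated and unconjugated forms of the bilinear pairing, which is justified by the reality of the spectrum established in Theorem~\ref{selfadjoint2}.
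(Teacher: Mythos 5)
Your proof is correct, and it reaches the conclusion by a slightly different route than the paper. The paper does not invoke Theorem \ref{sadfe1} at all in this proof: it writes out the two eigenrelations for $x_{\lambda_1}$ and $x_{\lambda_2}$ explicitly in terms of the operators $~^{ABR}_{a}D^\alpha$ and $~^{ABR}D^\alpha_b$, multiplies the first by $x_{\lambda_2}$ and the second by $x_{\lambda_1}$, subtracts, integrates over $[a,b]$, and applies the $ABR$ integration by parts formula (Theorem \ref{Integration by parts}) directly to see that the right-hand side vanishes --- in effect re-running the computation from the proof of Theorem \ref{sadfe1} on the specific pair of (real) eigenfunctions, with no complex conjugates appearing anywhere. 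You instead treat orthogonality as a formal consequence of the already-established self-adjointness $\langle L_1u,v\rangle=\langle u,L_1v\rangle$ together with the reality of the spectrum from Theorem \ref{selfadjoint2}. What your version buys is economy (no repetition of the integration-by-parts computation) and a more careful accounting of the mismatch between the conjugated, unweighted inner product of Theorem \ref{sadfe1} and the unconjugated, $r$-weighted pairing in the statement: you correctly note that the reality of $\lambda_i$ and of the coefficients $p,q,r$ lets one take the eigenfunctions real-valued, a point the paper leaves implicit by simply writing the eigenfunctions without bars. What the paper's version buys is self-containedness --- the cancellation is exhibited concretely via Theorem \ref{Integration by parts} rather than routed through two prior theorems. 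Both arguments are sound and rest on the same underlying mechanism.
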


\begin{proof}

Let $\lambda_1$  and $\lambda_2$ be two distinct eigenvalues of (\ref{cslp}) corresponding to the eigenfunctions $x_{\lambda_1}$ and $x_{\lambda_2}$, respectively. Then we have

\begin{eqnarray}
~~^{ABR}_{a}D^\alpha(p(t)~~^{ABR}D_b^\alpha x_{\lambda_1}(t))+q(t)x_{\lambda_1}(t)&=&\lambda_1 r(t)x_{\lambda_1}(t)\label{cev1}\\
~~^{ABR}_{a}D^\alpha(p(t)~~^{ABR}D_b^\alpha x_{\lambda_2}(t))+q(t)x_{\lambda_2}(t)&=&\lambda_2 r(t)x_{\lambda_2}(t)\label{cev2}.
\end{eqnarray}

\noindent  We multiply (\ref{cev1}) and (\ref{cev2}) by $x_{\lambda_2}(t)$ and $x_{\lambda_1}(t),$ respectively, and subtract  the results to obtain
 $$(\lambda_1-\lambda_2) r(t)x_{\lambda_1}(t)x_{\lambda_2}(t)=x_{\lambda_2}(t)~~^{ABR}_{a}D^\alpha(p(t)~~^{ABR}D_b^\alpha x_{\lambda_1}(t))-x_{\lambda_1}(t)~~^{ABR}_{a}D^\alpha(p(t)~~^{ABR}D_b^\alpha x_{\lambda_2}(t)). $$

\noindent Now, integrating from $a$ to $b,$ and using the integration by parts formula in Theorem \ref{Integration by parts} to the right side of the equation, we get

\begin{equation*}
(\lambda_{1}-\lambda_2)\int_{a}^{b}r(t)x_{\lambda_1}(t)x_{\lambda_2}(t)\,dt=0.
\end{equation*}
Since $\lambda_{1}\neq\lambda_{2},$ it follows that
\begin{equation*}
\int_{a}^{b}r(t)x_{\lambda_1}(t)x_{\lambda_2}(t)\,dt=0,
\end{equation*}
which completes the proof.
\end{proof}

Now, consider the nabla discrete fractional $SL$ operator
\begin{equation*}
 L_2 x(t)=~^{ABR}_{a}\nabla^{\alpha}(p(t)~~^{ABR}\nabla_b^\alpha x(t))+q(t)x(t),
\end{equation*}
and the corresponding $SLE$
\begin{equation}\label{dslp}
      ~^{ABR}_{a}\nabla^{\alpha}(p(t)~~^{ABR}\nabla_b^\alpha x(t))+q(t)x(t)=\lambda r(t) x(t),\quad t\in \mathbb{N}_{a+1,b-1},
\end{equation}
where  $\alpha\in (0,1/2),$ $p(t)>0,$ $ r(t)>0\,\forall t\in \mathbb{N}_{a,b}$, $p, q, r$ are real valued functions on $\mathbb{N}_{a,b}.$



\begin{theorem}
The  discrete fractional $SL$ operator $L_2$ is self-adjoint with respect to the inner product
$$<u,v>=\sum_{t=a+1}^{b-1} \overline{ u(t)}v(t).$$
\end{theorem}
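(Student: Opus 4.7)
The plan is to mirror the continuous self-adjointness argument in Theorem \ref{sadfe1}, with the discrete $ABR$ integration-by-parts formula Theorem \ref{Rp} playing the role that Theorem \ref{Integration by parts} played in the continuous case. First I would expand
\begin{align*}
v(t)\,\overline{L_2 u(t)} &= v(t)\,{}^{ABR}_{a}\nabla^{\alpha}\bigl(p(t)\,{}^{ABR}\nabla_b^\alpha \overline{u}(t)\bigr) + q(t)\,\overline{u}(t)\,v(t),\\
\overline{u}(t)\,L_2 v(t) &= \overline{u}(t)\,{}^{ABR}_{a}\nabla^{\alpha}\bigl(p(t)\,{}^{ABR}\nabla_b^\alpha v(t)\bigr) + q(t)\,\overline{u}(t)\,v(t),
\end{align*}
observing that $p,q,r$ are real-valued so $q$ survives complex conjugation unchanged. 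Subtracting cancels the $q(t)\overline{u}(t)v(t)$ contributions and leaves only the nested $ABR$ terms. Summing $t$ from $a+1$ to $b-1$ then expresses $<L_2 u,v>-<u,L_2 v>$ as the difference of two discrete sums, each of the form that Theorem \ref{Rp} accepts.

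Next I would apply Theorem \ref{Rp} once to each sum. For the first, take $f(s)=v(s)$ and $g(s)=p(s)\,{}^{ABR}\nabla_b^\alpha \overline{u}(s)$; for the second, take $f(s)=\overline{u}(s)$ and $g(s)=p(s)\,{}^{ABR}\nabla_b^\alpha v(s)$. Both sums then collapse to the same symmetric bilinear expression
$$\sum_{s=a+1}^{b-1} p(s)\,{}^{ABR}\nabla_b^\alpha \overline{u}(s)\,{}^{ABR}\nabla_b^\alpha v(s),$$
so their difference vanishes and $<L_2 u,v>=<u,L_2 v>$, as required.

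The only bookkeeping point is to verify that the composites $s\mapsto p(s)\,{}^{ABR}\nabla_b^\alpha \overline{u}(s)$ and $s\mapsto p(s)\,{}^{ABR}\nabla_b^\alpha v(s)$ are admissible inputs to Theorem \ref{Rp}, i.e.\ real-valued functions on $\mathbb{N}_{a,b}$. Since $p,u,v$ are given as functions on $\mathbb{N}_{a,b}$ and the right $ABR$ nabla difference of such a function is well-defined on $_{b-1}\mathbb{N}$, this is immediate on the summation range. Conceptually no new difficulty arises relative to the continuous case: the argument is a line-by-line discrete translation, and the entire content of the proof is encapsulated in the single application of Theorem \ref{Rp} to each of the two sums.
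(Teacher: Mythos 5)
Your proposal is correct and follows exactly the route the paper intends: it mirrors the proof of Theorem \ref{sadfe1} line by line, replacing the continuous integration by parts of Theorem \ref{Integration by parts} with the discrete $ABR$ integration by parts of Theorem \ref{Rp}, which is precisely what the paper's (omitted, ``left to the reader'') proof prescribes. The details you supply, including the choices of $f$ and $g$ in each application of Theorem \ref{Rp}, are the intended ones.
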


\begin{proof}
The proof  is similar to that of Theorem \ref{sadfe1}. However, it follows by making use of the discrete fractional integration by parts in Theorem \ref{Rp}. The details are left to the reader.
\end{proof}




\begin{theorem}
The  eigenvalues of the  $SLE$ (\ref{dslp}) are real.
\end{theorem}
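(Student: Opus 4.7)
The plan is to mimic the continuous proof of Theorem \ref{selfadjoint2} almost verbatim, replacing the integral inner product with the discrete one $\langle u,v\rangle=\sum_{t=a+1}^{b-1}\overline{u(t)}v(t)$ and invoking the self-adjointness of $L_2$ established in the previous theorem in place of direct integration by parts.

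First I would suppose $\lambda$ is an eigenvalue of (\ref{dslp}) with associated nontrivial eigenfunction $x$, so that
\begin{equation*}
L_2 x(t)=\lambda r(t)x(t),\quad t\in\mathbb{N}_{a+1,b-1}.
\end{equation*}
Because $p$, $q$, and $r$ are real valued and the operators $~^{ABR}_{a}\nabla^\alpha$ and $~^{ABR}\nabla_b^\alpha$ commute with complex conjugation (their definitions involve only real coefficients and real-valued discrete Mittag-Leffler kernels), taking conjugates yields
\begin{equation*}
L_2\overline{x}(t)=\overline{\lambda}\,r(t)\overline{x}(t),\quad t\in\mathbb{N}_{a+1,b-1}.
\end{equation*}

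Next I would multiply the first identity by $\overline{x}(t)$ and the second by $x(t)$, subtract, and sum over $t\in\mathbb{N}_{a+1,b-1}$ to obtain
\begin{equation*}
(\lambda-\overline{\lambda})\sum_{t=a+1}^{b-1}r(t)|x(t)|^2=\sum_{t=a+1}^{b-1}\bigl[\overline{x}(t)L_2 x(t)-x(t)L_2\overline{x}(t)\bigr].
\end{equation*}
The right-hand side is precisely $\langle x,L_2 x\rangle-\langle L_2 x,x\rangle$ in the inner product of the previous theorem, which vanishes by the self-adjointness of $L_2$. Hence
\begin{equation*}
(\lambda-\overline{\lambda})\sum_{t=a+1}^{b-1}r(t)|x(t)|^2=0.
\end{equation*}

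Finally, since $r(t)>0$ on $\mathbb{N}_{a,b}$ and $x$ is nontrivial, the weighted norm $\sum_{t=a+1}^{b-1}r(t)|x(t)|^2$ is strictly positive, so one may divide through to conclude $\lambda=\overline{\lambda}$, i.e.\ $\lambda\in\mathbb{R}$. There is no real obstacle here: the only subtle point worth a sentence of justification is that conjugation commutes with the $ABR$ nabla fractional differences, which is immediate from their series representations; everything else is a mechanical transcription of the continuous argument from Theorem \ref{selfadjoint2} into the discrete setting.
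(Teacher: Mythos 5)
Your argument is correct and is essentially the proof the paper intends: the paper simply states that the proof is similar to that of Theorem \ref{selfadjoint2}, i.e.\ multiply the eigenvalue equations for $x$ and $\overline{x}$ by $\overline{x}$ and $x$, subtract, sum over $\mathbb{N}_{a+1,b-1}$, and use the self-adjointness of $L_2$ together with $r(t)>0$ to conclude $\lambda=\overline{\lambda}$. Your added remark that conjugation commutes with the $ABR$ nabla differences (because the discrete Mittag-Leffler kernels are real) is a correct and worthwhile clarification, but it does not change the route.
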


\begin{proof}
The proof  is similar to that of Theorem \ref{selfadjoint2}. The details are left to the reader.
\end{proof}


\begin{theorem}
The eigenfunctions, corresponding to distinct eigenvalues of the  $SLE$ (\ref{dslp}) are orthogonal  with respect to the weight function $r$ on $\mathbb{N}_{a,b}$ that is

$$<x_{\lambda_1},x_{\lambda_2}>=\sum_{t=a+1}^{b-1} r(t)x_{\lambda_1}(t)x_{\lambda_2}(t)=0,\quad \lambda_1 \ne \lambda_2,$$
when the functions $x_{\lambda_i}$  correspond to eigenvalues $\lambda_i,\,i=1,2.$
\end{theorem}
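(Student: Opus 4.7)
The plan is to mirror the proof of Theorem \ref{selfadjoint3} exactly, replacing the continuous fractional integration by parts in Theorem \ref{Integration by parts} with the discrete $ABR$ integration by parts in Theorem \ref{Rp}, and replacing the integral over $[a,b]$ with the sum over $\mathbb{N}_{a+1,b-1}$. Let $x_{\lambda_1}$ and $x_{\lambda_2}$ be eigenfunctions of (\ref{dslp}) corresponding to the distinct eigenvalues $\lambda_1$ and $\lambda_2$. First I would write out the two copies of (\ref{dslp}), namely $L_2 x_{\lambda_1}(t)=\lambda_1 r(t)x_{\lambda_1}(t)$ and $L_2 x_{\lambda_2}(t)=\lambda_2 r(t)x_{\lambda_2}(t)$ for $t\in \mathbb{N}_{a+1,b-1}$.

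Next I would multiply the first by $x_{\lambda_2}(t)$ and the second by $x_{\lambda_1}(t)$, subtract, and observe that the $q(t)x_{\lambda_1}(t)x_{\lambda_2}(t)$ terms cancel, leaving
\begin{equation*}
(\lambda_1-\lambda_2) r(t)x_{\lambda_1}(t)x_{\lambda_2}(t)=x_{\lambda_2}(t)~^{ABR}_{a}\nabla^{\alpha}(p(t)~^{ABR}\nabla_b^\alpha x_{\lambda_1}(t))-x_{\lambda_1}(t)~^{ABR}_{a}\nabla^{\alpha}(p(t)~^{ABR}\nabla_b^\alpha x_{\lambda_2}(t)).
\end{equation*}
I would then sum both sides over $t$ from $a+1$ to $b-1$. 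Applying Theorem \ref{Rp} to each term on the right (taking $f=x_{\lambda_2}$, $g=p\,{}^{ABR}\nabla_b^\alpha x_{\lambda_1}$ in the first and the roles swapped in the second) converts both into sums of the form $\sum_{s=a+1}^{b-1} p(s)\,{}^{ABR}\nabla_b^\alpha x_{\lambda_1}(s)\,{}^{ABR}\nabla_b^\alpha x_{\lambda_2}(s)$, which then cancel. Hence the right-hand side becomes $0$, and since $\lambda_1\neq \lambda_2$ we may divide out to conclude the desired orthogonality.

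I do not anticipate a real obstacle here: the argument is entirely parallel to the continuous case, and Theorem \ref{Rp} is precisely the tool designed to make this symmetric cancellation work in the discrete setting. The only small point to be careful about is that Theorem \ref{Rp} is stated for functions on $\mathbb{N}_{a,b}$ and that the symmetry requires the product $p\,{}^{ABR}\nabla_b^\alpha x_{\lambda_i}$ to be viewed as such a function, which it is by the assumed regularity of $p$ and of the eigenfunctions on $\mathbb{N}_{a,b}$. Everything else is a direct transcription of the proof of Theorem \ref{selfadjoint3}, and indeed the author explicitly leaves the details to the reader for the two preceding discrete theorems, which supports the expectation that this proof is routine given the machinery already in hand.
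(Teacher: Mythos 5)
Your proposal is correct and matches the paper's intent exactly: the paper's own proof simply states that the argument is the same as that of Theorem \ref{selfadjoint3} with Theorem \ref{Rp} in place of the continuous integration by parts, and leaves the details to the reader. Your transcription of those details — multiplying, subtracting, summing over $\mathbb{N}_{a+1,b-1}$, and applying Theorem \ref{Rp} to produce the symmetric cancellation — is precisely what is intended.
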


\begin{proof}
The proof  is similar to that of Theorem \ref{selfadjoint3}. However, it follows by making use of the discrete fractional integration by parts in Theorem \ref{Rp}. The details are left to the reader.
\end{proof}





Denoting the $SL$ operator as

$$~^{C}L_1 x(t)=~~^{ABC}_{a}D^\alpha (p(t)~^{ABR}D^\alpha_b x(t))+q(t)x(t),$$

\noindent consider the $ABC$ type fractional $SLE$ 

\begin{equation}\label{LC}
 ~~^{ABC}_{a}D^\alpha (p(t)~^{ABR}D^\alpha_b x(t))+q(t)x(t)=\lambda r(t) x(t),~~t\in (a,b),
\end{equation}
where $\alpha\in(0,1),$ $p(t)\ne 0,$ $ r(t)>0\,\forall t\in [a,b]$, $p, q, r$ are real valued continuous functions on the interval $[a,b]$, and the boundary conditions:

\begin{equation}\label{Bc1}
 c_1 \textbf{ E}^1_{\alpha,1, \frac{-\alpha}{1-\alpha},b^-}x(a)+ c_2 ~^{ABR}D_b^\alpha x(a)=0,
\end{equation}

\begin{equation}\label{Bc2}
 d_1 \textbf{ E}^1_{\alpha,1, \frac{-\alpha}{1-\alpha},b^-}x(b)+ d_2 ~^{ABR}D_b^\alpha x(b)=0,
\end{equation}
where  $c_1^2+c_2 ^2\ne0$ and  $d_1^2+d_2^2\ne0$.


\begin{theorem} \label{EV}
The eigenvalues of the $SLP$ (\ref{LC})-(\ref{Bc2})are real.
\end{theorem}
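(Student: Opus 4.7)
The plan is to adapt the argument of Theorem \ref{selfadjoint2} but with care taken to the fact that the outer operator is now $~^{ABC}_{a}D^\alpha$, so the appropriate integration-by-parts identity is the first part of Proposition \ref{C by parts} rather than Theorem \ref{Integration by parts}. Unlike the purely Riemann--Liouville case, this identity produces a nonzero boundary contribution, and the boundary conditions (\ref{Bc1})--(\ref{Bc2}) are tailored precisely to annihilate it.

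First I would let $\lambda$ be an eigenvalue with nontrivial eigenfunction $x$. Since $p,q,r$ are real valued, the complex conjugate $\bar x$ satisfies $~^{C}L_1 \bar x = \bar\lambda r\bar x$ together with the boundary conditions (\ref{Bc1})--(\ref{Bc2}). Multiplying (\ref{LC}) by $\bar x$ and the conjugated equation by $x$, subtracting, and integrating over $[a,b]$ gives
\begin{equation*}
(\lambda-\bar\lambda)\int_a^b r(t)|x(t)|^2\,dt = \int_a^b \bar x(t)\,~^{ABC}_{a}D^\alpha\bigl(p\cdot~^{ABR}D_b^\alpha x\bigr)(t)\,dt - \int_a^b x(t)\,~^{ABC}_{a}D^\alpha\bigl(p\cdot~^{ABR}D_b^\alpha \bar x\bigr)(t)\,dt.
\end{equation*}

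Next I would apply the first identity in Proposition \ref{C by parts} to each integral on the right, taking $g=\bar x$ and $f=p\cdot~^{ABR}D_b^\alpha x$ in the first, and swapping roles of $x$ and $\bar x$ in the second. The resulting ``interior'' integrals both equal $\int_a^b p(t)\,~^{ABR}D_b^\alpha x(t)\,~^{ABR}D_b^\alpha\bar x(t)\,dt$, so they cancel in the subtraction, and only the boundary contribution
\begin{equation*}
\frac{B(\alpha)}{1-\alpha}\Bigl[p(t)\,~^{ABR}D_b^\alpha x(t)\,\mathbf{E}^1_{\alpha,1,\frac{-\alpha}{1-\alpha},b^-}\bar x(t) - p(t)\,~^{ABR}D_b^\alpha \bar x(t)\,\mathbf{E}^1_{\alpha,1,\frac{-\alpha}{1-\alpha},b^-} x(t)\Bigr]_{t=a}^{t=b}
\end{equation*}
survives.

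The decisive step is then to show this bracket vanishes at both endpoints. At $t=a$, applying (\ref{Bc1}) to both $x$ and $\bar x$ yields a homogeneous $2\times 2$ linear system in $(c_1,c_2)$ whose coefficient matrix has rows $\bigl(\mathbf{E}^1_{\alpha,1,\frac{-\alpha}{1-\alpha},b^-}x(a),\,~^{ABR}D_b^\alpha x(a)\bigr)$ and $\bigl(\mathbf{E}^1_{\alpha,1,\frac{-\alpha}{1-\alpha},b^-}\bar x(a),\,~^{ABR}D_b^\alpha \bar x(a)\bigr)$; since $(c_1,c_2)\ne(0,0)$, this matrix must be singular, and its determinant is exactly the bracket evaluated at $t=a$. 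An identical argument based on (\ref{Bc2}) and $(d_1,d_2)\ne(0,0)$ kills the bracket at $t=b$. The full boundary term is therefore zero, so $(\lambda-\bar\lambda)\int_a^b r(t)|x(t)|^2\,dt=0$, and since $r>0$ and $x$ is nontrivial we conclude $\lambda=\bar\lambda$.

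The main obstacle is really only notational bookkeeping: keeping the two $\mathbf{E}^1_{\alpha,1,\frac{-\alpha}{1-\alpha},b^-}$ boundary factors aligned with the correct eigenfunction, and recognizing the bracketed endpoint expression as precisely the Wronskian-type determinant annihilated by the boundary conditions. There is no new analytic content beyond Proposition \ref{C by parts}.
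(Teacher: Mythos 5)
Your proposal is correct and follows essentially the same route as the paper: apply the first integration-by-parts identity of Proposition \ref{C by parts} to both terms of the subtracted eigenvalue equations, cancel the symmetric interior integrals, and kill the endpoint bracket via the Wronskian-type determinant argument forced by $(c_1,c_2)\neq(0,0)$ and $(d_1,d_2)\neq(0,0)$. The only cosmetic difference is that the paper first records the identity as a standalone observation before specializing to $u=x$, $v=\overline{x}$, and it leaves the vanishing of the boundary terms as an unspelled consequence of the boundary conditions, which you make explicit.
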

\begin{proof}
By the help of the integration by parts formula in Proposition \ref{C by parts}, we observe that
$$ \int_a^b u(t) ^{C}L_1 v(t) dt=\int_a^b q(t) u(t)v(t)dt+$$
\begin{equation}\label{Obs}
 \int_a^b p(t) ~^{ABR}D_b^\alpha v(t)~^{ABR}D_b^\alpha u(t)dt+\frac{B(\alpha)}{1-\alpha}p(t)\textbf{ E}^1_{\alpha,1, \frac{-\alpha}{1-\alpha},b^-}u (t)~^{ABR}D_b^\alpha v(t)|_a^b.
\end{equation}
Assume that $\lambda$ is the eigenvalue for (\ref{LC})-(\ref{Bc2}) corresponding to eigenfunction $x$. Then, $x$ and its complex conjugate $\overline{x}$  satisfy
\begin{equation}\label{s1}
  ^{C}L_1 x(t)= \lambda r(t) x(t),
\end{equation}
\begin{equation}\label{s11}
 c_1\textbf{ E}^1_{\alpha,1, \frac{-\alpha}{1-\alpha},b^-}x(a)+ c_2 ~^{ABR}D_b^\alpha x(a)=0,
\end{equation}

\begin{equation}\label{s12}
 d_1 \textbf{ E}^1_{\alpha,1, \frac{-\alpha}{1-\alpha},b^-}x(b)+ d_2 ~^{ABR}D_b^\alpha x(b)=0,
\end{equation}
and
\begin{equation}\label{cs1}
  ^{C}L_1 \overline{x}(t)= \overline{\lambda} r(t) \overline{x}(t),
\end{equation}
\begin{equation}\label{cs11}
 c_1 \textbf{ E}^1_{\alpha,1, \frac{-\alpha}{1-\alpha},b^-}\overline{x}(a)+ c_2 ~^{ABR}D_b^\alpha \overline{x}(a)=0,
\end{equation}

\begin{equation}\label{cs12}
 d_1 \textbf{ E}^1_{\alpha,1, \frac{-\alpha}{1-\alpha},b^-}\overline{x}(b)+ d_2 ~^{ABR}D_b^\alpha \overline{x}(b)=0,
\end{equation}
with $c_1^2+c_2 ^2\ne 0$ and  $d_1^2+d_2^2\ne 0$.
We multiply (\ref{s1}) by $\overline{x}(t)$ and (\ref{cs1}) by $x(t)$, respectively, and subtract to obtain
\begin{equation*}\label{oo}
  (\overline{\lambda}-\lambda)r(t) x(t) \overline{x}(t)=x(t)^{C}L_1 \overline{x}(t)- \overline{x}(t)^{C}L_1x(t).
\end{equation*}

Now, integrate over the interval $[a,b]$ and apply (\ref{Obs}) with $u(t)=x(t)$ and $v(t)=\overline{x}(t)$  and vice versa to obtain
$$(\overline{\lambda}-\lambda)\int_a^b r(t)|x(t)|^2 dt=$$
\begin{equation*}\label{xx}
  \frac{B(\alpha)}{1-\alpha} p(b)\left[\textbf{ E}^1_{\alpha,1, \frac{-\alpha}{1-\alpha},b^-}x(b) ~^{ABR}D_b^\alpha \overline{x}(b)- \textbf{ E}^1_{\alpha,1, \frac{-\alpha}{1-\alpha},b^-}\overline{x}(b) ~^{ABR}D_b^\alpha x(b)\right]+
\end{equation*}
\begin{equation*}\label{xx1}
 \frac{B(\alpha)}{1-\alpha} p(a)\left[ \textbf{ E}^1_{\alpha,1, \frac{-\alpha}{1-\alpha},b^-}\overline{x}(a) ~^{ABR}D_b^\alpha x(a)- \textbf{ E}^1_{\alpha,1, \frac{-\alpha}{1-\alpha},b^-}x(a) (~^{ABR}D_b^\alpha \overline{x}(a) \right].
\end{equation*}
Finally, by making use of the boundary conditions (\ref{s11}), (\ref{s12}), (\ref{cs11}), and (\ref{cs12}), we obtain
$$(\overline{\lambda}-\lambda)\int_a^br(t)|x(t)|^2 dt=0.$$
Because $x$ is a nontrivial solution and $r(t)>0$, we conclude that $\lambda=\overline{\lambda}.$
\end{proof}

\begin{theorem} \label{sonra}
The eigenfunctions, corresponding to distinct eigenvalues of  the $SLP$ (\ref{LC})-(\ref{Bc2}) are orthogonal with respect to  the weight function $r$ on $[a,b]$ that is
$$<x_{\lambda_1},x_{\lambda_2}>=\int_{a}^{b} r(t)x_{\lambda_1}(t)x_{\lambda_2}(t)\,dt=0,\quad \lambda_1 \ne \lambda_2,$$
when the functions $x_{\lambda_i}$  correspond to eigenvalues $\lambda_i,\,i=1,2.$
\end{theorem}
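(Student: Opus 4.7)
The plan is to follow very closely the structure of the proof of Theorem \ref{EV}, only now with two distinct real eigenvalues (using Theorem \ref{EV} to avoid complex conjugation) and two different real eigenfunctions $x_{\lambda_1}, x_{\lambda_2}$ both satisfying the boundary conditions (\ref{Bc1})--(\ref{Bc2}). I will start by writing
\begin{equation*}
{}^{C}L_{1} x_{\lambda_1}(t)=\lambda_1 r(t) x_{\lambda_1}(t), \qquad {}^{C}L_{1} x_{\lambda_2}(t)=\lambda_2 r(t) x_{\lambda_2}(t),
\end{equation*}
multiplying the first by $x_{\lambda_2}(t)$ and the second by $x_{\lambda_1}(t)$, subtracting, and integrating from $a$ to $b$, so that the left side becomes $(\lambda_1-\lambda_2)\int_a^b r(t)x_{\lambda_1}(t)x_{\lambda_2}(t)\,dt$.

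The next step is to process the right side by applying the key identity (\ref{Obs}) that was derived in the proof of Theorem \ref{EV} from Proposition \ref{C by parts}. I will apply it once with $(u,v)=(x_{\lambda_2},x_{\lambda_1})$ and once with $(u,v)=(x_{\lambda_1},x_{\lambda_2})$. On both applications the $\int_a^b q(t)u(t)v(t)\,dt$ term and the $\int_a^b p(t) \,{}^{ABR}D_b^{\alpha} u(t) \,{}^{ABR}D_b^{\alpha} v(t)\,dt$ term are symmetric in $u,v$, so they cancel when we subtract. What survives is exactly the boundary contribution
\begin{equation*}
\frac{B(\alpha)}{1-\alpha}\,p(t)\left[\textbf{E}^1_{\alpha,1,\frac{-\alpha}{1-\alpha},b^-}x_{\lambda_2}(t)\,{}^{ABR}D_b^\alpha x_{\lambda_1}(t) - \textbf{E}^1_{\alpha,1,\frac{-\alpha}{1-\alpha},b^-}x_{\lambda_1}(t)\,{}^{ABR}D_b^\alpha x_{\lambda_2}(t)\right]\Big|_a^b.
\end{equation*}

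The main technical point is showing that this bracket vanishes at both endpoints. I expect this to be the only substantive step, and it is the same linear-algebra argument used in Theorem \ref{EV}: at $t=a$ the boundary condition (\ref{Bc1}) applied to $x_{\lambda_1}$ and $x_{\lambda_2}$ gives a homogeneous $2\times 2$ system in $(c_1,c_2)$, with coefficient matrix whose determinant is precisely the bracket evaluated at $a$; since $(c_1,c_2)\neq(0,0)$, that determinant must vanish. The same reasoning at $t=b$ using (\ref{Bc2}) and $(d_1,d_2)\neq(0,0)$ kills the other endpoint term.

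Putting it together, the right side is zero, so
\begin{equation*}
(\lambda_1-\lambda_2)\int_a^b r(t)\,x_{\lambda_1}(t)x_{\lambda_2}(t)\,dt = 0,
\end{equation*}
and since $\lambda_1\neq\lambda_2$ the orthogonality $\int_a^b r(t)\,x_{\lambda_1}(t)x_{\lambda_2}(t)\,dt=0$ follows. No new integration by parts beyond Proposition \ref{C by parts} is needed; the entire proof is a recycling of formula (\ref{Obs}) from the previous theorem with the complex conjugation replaced by passage to a second eigenfunction.
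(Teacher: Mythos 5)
Your proposal is correct and follows essentially the same route as the paper's proof: multiply the two eigenvalue equations cross-wise, subtract, integrate, apply the identity (\ref{Obs}) in both orders so the symmetric terms cancel, and kill the boundary bracket via the boundary conditions. The only difference is that you spell out the $2\times2$ determinant argument for why the bracket vanishes at each endpoint, which the paper leaves implicit; that is a welcome clarification, not a different method.
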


\begin{proof}
Assume that $x_{\lambda_1}$ and $x_{\lambda_2}$ are eigenfunctions  corresponding to two distinct eigenvalues $\lambda_1$ and $\lambda_2,$ respectively. Then we have
\begin{equation}\label{os1}
  ^{C}L_1 x_{\lambda_1}(t)= \lambda_1 r(t) x_{\lambda_1}(t),
\end{equation}
\begin{equation}\label{os11}
 c_1 \textbf{ E}^1_{\alpha,1, \frac{-\alpha}{1-\alpha},b^-}x_{\lambda_1}(a)+ c_2 ~^{ABR}D_b^\alpha x_{\lambda_1}(a)=0,
\end{equation}

\begin{equation}\label{os12}
 d_1 \textbf{ E}^1_{\alpha,1, \frac{-\alpha}{1-\alpha},b^-}x_{\lambda_1}(b)+ d_2 ~^{ABR}D_b^\alpha x_{\lambda_1}(b)=0,
\end{equation}
and
\begin{equation}\label{ocs1}
  ^{C}L_1 x_{\lambda_2}(t)= \lambda_2 r(t) x_{\lambda_2}(t),
\end{equation}
\begin{equation}\label{ocs11}
 c_1 \textbf{ E}^1_{\alpha,1, \frac{-\alpha}{1-\alpha},b^-}x_{\lambda_2}(a)+ c_2 ~^{ABR}D_b^\alpha x_{\lambda_2}(a)=0,
\end{equation}
\begin{equation}\label{ocs12}
 d_1 \textbf{ E}^1_{\alpha,1, \frac{-\alpha}{1-\alpha},b^-}x_{\lambda_2}(b)+ d_2 ~^{ABR}D_b^\alpha x_{\lambda_2}(b)=0.
\end{equation}
We multiply (\ref{os1}) by $x_{\lambda_2}(t)$ and (\ref{ocs1}) by $x_{\lambda_1}(t),$ respectively, and subtract to obtain
\begin{equation*}\label{wereach}
  x_{\lambda_2}(t)~^{C}L_1 x_{\lambda_1}(t)-x_{\lambda_1}(t)~^{C}L_1 x_{\lambda_2}(t)= (\lambda_1-\lambda_2) r(t) x_{\lambda_1}(t)x_{\lambda_2}(t).
\end{equation*}
Now, we integrate over the interval $[a,b]$ and apply (\ref{Obs}) with $u(t)=x_{\lambda_2}(t)$ and $v(t)=x_{\lambda_1}(t)$ and vice versa to obtain

$$(\lambda_1-\lambda_2)\int_a^b r(t) x_{\lambda_1}(t)x_{\lambda_2}(t)\,dt=$$
\begin{equation*}\label{xxxxxxx}
  \frac{B(\alpha)}{1-\alpha} p(b)\left[\textbf{ E}^1_{\alpha,1, \frac{-\alpha}{1-\alpha},b^-}x_{\lambda_2}(b) ~^{ABR}D_b^\alpha x_{\lambda_1}(b)- \textbf{ E}^1_{\alpha,1, \frac{-\alpha}{1-\alpha},b^-}x_{\lambda_1}(b) ~^{ABR}D_b^\alpha x_{\lambda_2}(b)\right]+
\end{equation*}
\begin{equation*}\label{xx1}
 \frac{B(\alpha)}{1-\alpha} p(a)\left[ \textbf{ E}^1_{\alpha,1, \frac{-\alpha}{1-\alpha},b^-}x_{\lambda_1}(a) ~^{ABR}D_b^\alpha x_{\lambda_2}(a)- \textbf{ E}^1_{\alpha,1, \frac{-\alpha}{1-\alpha},b^-}x_{\lambda_2}(a) ~^{ABR}D_b^\alpha x_{\lambda_1}(a) \right].
\end{equation*}
Finally, by using the boundary conditions (27), (28), (30), and (31), we conclude that
$$(\lambda_1-\lambda_2)\int_a^b r(t)x_{\lambda_1}(t) x_{\lambda_2}(t) dt=0,$$ and hence, as $\lambda_1\neq \lambda_2$,  $<x_{\lambda_1},x_{\lambda_2}>=0$.
\end{proof}
\indent
\begin{remark}\label{rem10}
The integration by parts formula in  the second part of Proposition \ref{C by parts} suggests the following $SLE$:
\begin{equation}\label{LR}
 ~~^{ABC}D^\alpha_b (p(t)~^{ABR}_{a}D^\alpha x(t))+q(t)x(t)=\lambda r(t) x(t),~~t\in (a,b),
\end{equation}
with the boundary conditions:

\begin{equation}\label{RBc1}
 c_1\textbf{ E}^1_{\alpha,1, \frac{-\alpha}{1-\alpha},a^+}x(a)+ c_2 ~^{ABR}_{a}D^\alpha x(a)=0,
\end{equation}

\begin{equation}\label{RBc2}
 d_1 \textbf{ E}^1_{\alpha,1, \frac{-\alpha}{1-\alpha},a^+}x(b)+ d_2 ~^{ABR}_{a}D^\alpha x(b)=0,
\end{equation}
where $c_1^2+c_2 ^2\ne0$ and  $d_1^2+d_2^2\ne0$.
Similar properties can  also be proved for such a $SLP$ (\ref{LR})-(\ref{RBc2}) as proved for the $SLP$ (\ref{LC})-(\ref{Bc2}).
\end{remark}

Denoting the SL operator as
\begin{equation*}
 ~^{dC}L_1x(t)=~~^{ABC}_{a}\nabla^\alpha (p(t)~^{ABR}\nabla^\alpha_b x(t))+q(t)x(t),
\end{equation*}
consider the following nabla discrete  $ABC$ type $SLE$:
\begin{equation}\label{dLC}
 ~~^{ABC}_{a}\nabla^\alpha (p(t)~^{ABR}\nabla^\alpha_b x(t))+q(t)x(t)=\lambda r(t) x(t),~~t\in \mathbb{N}_{a+1,b-1},
\end{equation}
where $\alpha\in(0,1/2),$ $p(t)\ne 0,$ $ r(t)>0\,\forall t\in \mathbb{N}_{a,b-1},$ $p, q, r$ are real valued  functions on $\mathbb{N}_{a,b-1},$ and the boundary conditions:
\begin{equation}\label{dBc1}
 c_1 \textbf{ E}^1_{\overline{\alpha,1}, \frac{-\alpha}{1-\alpha},b^-}x(a)+ c_2 ~^{ABR}\nabla_b^\alpha x(a)=0,
\end{equation}

\begin{equation}\label{dBc2}
 d_1 \textbf{ E}^1_{\overline{\alpha,1}, \frac{-\alpha}{1-\alpha},b^-}x(b-1)+ d_2 ~^{ABR}\nabla_b^\alpha x(b-1)=0,
\end{equation}
where $c_1^2+c_2 ^2\ne0$ and  $d_1^2+d_2^2\ne0$.

Now, we make use of the integration by parts formula in Theorem \ref{th9} to prove  the corresponding properties for the $SLP$ (\ref{dLC})-(\ref{dBc2}). For such a discrete fractional $SLP$, we use the following inner product
\begin{equation}\label{ip}
  <f,g>= \sum_{t=a+1}^{b-1}r(t) f(t)g(t).
\end{equation}

\begin{remark} \label{rem4}
If we want to use the integration by parts formula stated in Remark \ref{rem3}, then we consider the following nabla discrete fractional $SLE$
 \begin{equation*}\label{dLC3}
 ~^{dC}L _{2} x(t)=~~^{ABC}_{a}\nabla^\alpha (p(t)~^{ABR}\nabla^\alpha_{b+1} x(t))+q(t)x(t)=\lambda r(t) x(t),~~t\in \mathbb{N}_{a+1,b},
\end{equation*}
where $\alpha\in(0,1/2),$ $p(t)\ne 0,$ $ r(t)>0\,\forall t\in \mathbb{N}_{a,b},$ $p, q, r$ are real valued  functions on $\mathbb{N}_{a,b},$ and the boundary conditions:
\begin{equation*}\label{dBc13}
 c_1 \textbf{ E}^1_{\overline{\alpha,1}, \frac{-\alpha}{1-\alpha},b+1^-}x(a)+ c_2 ~^{ABR}\nabla_{b+1}^\alpha x(a)=0,
\end{equation*}

\begin{equation*}\label{dBc23}
 d_1 \textbf{ E}^1_{\overline{\alpha,1}, \frac{-\alpha}{1-\alpha},b+1^-}x(b)+ d_2 ~^{ABR}\nabla_{b+1}^\alpha x(b)=0,
\end{equation*}
where $c_1^2+c_2 ^2\ne0$ and  $d_1^2+d_2^2\ne0$, together with the inner product
\begin{equation*}\label{ip3}
   <f,g>= \sum_{t=a+1}^{b}r(t) f(t)g(t).
\end{equation*}
\end{remark}

\begin{theorem} \label{dEV}
The eigenvalues of the nabla discrete fractional $SLP$ (\ref{dLC})-(\ref{dBc2})are real.
\end{theorem}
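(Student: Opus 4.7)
The plan is to imitate the continuous-case argument in Theorem \ref{EV}, replacing the integration-by-parts tool from Proposition \ref{C by parts} with its discrete analogue, Theorem \ref{th9}. The reality of the eigenvalues is obtained from a discrete Green-type identity for $\,{}^{dC}L_1\,$ together with the boundary conditions (\ref{dBc1})--(\ref{dBc2}).

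First I would establish a discrete Green's identity. Setting $f(s)=u(s)$ and $g(s)=p(s)\,{}^{ABR}\nabla_b^\alpha v(s)$ in Theorem \ref{th9} yields
\begin{eqnarray*}
\sum_{s=a+1}^{b-1}u(s)\,{}^{dC}L_1 v(s) &=& \sum_{s=a+1}^{b-1}q(s)u(s)v(s)\\
 & & +\sum_{s=a+1}^{b-1}p(s-1)\,{}^{ABR}\nabla_b^\alpha v(s-1)\,{}^{ABR}\nabla_b^\alpha u(s-1)\\
 & & +\left.p(t)\,{}^{ABR}\nabla_b^\alpha v(t)\,\frac{B(\alpha)}{1-\alpha}\,\textbf{E}^1_{\overline{\alpha,1},\frac{-\alpha}{1-\alpha},b^-}u(t)\right|_a^{b-1}.
\end{eqnarray*}
The decisive observation is that the first two sums on the right are symmetric in $u$ and $v$, so the same formula with the roles of $u$ and $v$ reversed has the same bulk contribution. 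This is the discrete counterpart of identity (\ref{Obs}).

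Next, assuming $\lambda$ is an eigenvalue with eigenfunction $x$, both $x$ and $\overline{x}$ satisfy the equation $\,{}^{dC}L_1 x = \lambda r x\,$ (respectively $\overline{\lambda} r \overline{x}$) as well as the boundary conditions (\ref{dBc1})--(\ref{dBc2}) (the latter with $\overline{x}$, since $p,q,r,c_i,d_i$ are real). Multiplying the first equation by $\overline{x}(t)$ and the conjugated equation by $x(t)$, subtracting, and summing over $t\in\mathbb{N}_{a+1,b-1}$, the $q$-terms and the symmetric bulk sums cancel, leaving
\begin{eqnarray*}
(\overline{\lambda}-\lambda)\sum_{t=a+1}^{b-1}r(t)|x(t)|^2 &=& \frac{B(\alpha)}{1-\alpha}\,p(b-1)\Bigl[\textbf{E}^1_{\overline{\alpha,1},\frac{-\alpha}{1-\alpha},b^-}x(b-1)\,{}^{ABR}\nabla_b^\alpha \overline{x}(b-1)\\
 & & \hspace{2.5cm}-\,\textbf{E}^1_{\overline{\alpha,1},\frac{-\alpha}{1-\alpha},b^-}\overline{x}(b-1)\,{}^{ABR}\nabla_b^\alpha x(b-1)\Bigr]\\
 & & -\frac{B(\alpha)}{1-\alpha}\,p(a)\Bigl[\textbf{E}^1_{\overline{\alpha,1},\frac{-\alpha}{1-\alpha},b^-}x(a)\,{}^{ABR}\nabla_b^\alpha \overline{x}(a)\\
 & & \hspace{2.5cm}-\,\textbf{E}^1_{\overline{\alpha,1},\frac{-\alpha}{1-\alpha},b^-}\overline{x}(a)\,{}^{ABR}\nabla_b^\alpha x(a)\Bigr].
\end{eqnarray*}
Each bracket is a $2\times 2$ determinant that vanishes, because the pair $\bigl(\textbf{E}^1_{\overline{\alpha,1},\frac{-\alpha}{1-\alpha},b^-}x,\,{}^{ABR}\nabla_b^\alpha x\bigr)$ and its conjugate both annihilate the same linear boundary functional (\ref{dBc1}) at $a$ and (\ref{dBc2}) at $b-1$, given that $(c_1,c_2)\neq(0,0)$ and $(d_1,d_2)\neq(0,0)$. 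Since $r>0$ and $x\not\equiv 0$, we conclude $\overline{\lambda}=\lambda$.

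I expect the only real nuisance to be bookkeeping: the index shift $s\mapsto s-1$ in Theorem \ref{th9} produces $p(s-1)$ and values at $s-1$ in the bulk, and one must confirm that the boundary contribution appears exactly at $t=a$ and $t=b-1$ so that it is killed by the prescribed boundary conditions (which, fortunately, are stated precisely at those endpoints). Once this alignment is checked, the argument is a direct transcription of the proof of Theorem \ref{EV}.
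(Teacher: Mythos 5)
Your proposal is correct and follows exactly the route the paper intends: the paper's own proof of Theorem \ref{dEV} simply states that it mirrors Theorem \ref{EV} with Theorem \ref{th9} in place of Proposition \ref{C by parts}, and your write-up supplies precisely those details, including the correct handling of the $s\mapsto s-1$ shift in the symmetric bulk sum and the vanishing of the boundary determinants at $t=a$ and $t=b-1$ via (\ref{dBc1})--(\ref{dBc2}). Nothing further is needed.
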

\begin{proof}
The proof is similar to that of Theorem \ref{EV}. However, it follows by making use of the discrete fractional integration by parts in Theorem \ref{th9}. The details are left to the reader.
\end{proof}

\begin{theorem}
The eigenfunctions, corresponding to distinct eigenvalues of the  $SLP$ (\ref{dLC})-(\ref{dBc2}) are orthogonal with respect to  the weight function $r$ on $[a,b]$ that is
$$
  <x_{\lambda_1}, x_{\lambda_2}>=\sum_{t=a+1}^{b-1} r(t) x_{\lambda_1}(t)x_{\lambda_2}(t)  =0,\quad {\lambda_1}\ne{\lambda_2},$$
when the functions $x_{\lambda_i}$  correspond to eigenvalues $\lambda_i,\,i=1,2.$

\end{theorem}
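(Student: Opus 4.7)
The plan is to follow the template of Theorem \ref{sonra}, replacing the continuous integration by parts (Proposition \ref{C by parts}) with its discrete counterpart from Theorem \ref{th9}. First I would write down the two eigenvalue equations: for $i=1,2$, the eigenfunction $x_{\lambda_i}$ satisfies (\ref{dLC}) with $\lambda=\lambda_i$ together with the boundary conditions (\ref{dBc1}) and (\ref{dBc2}). Multiplying the $\lambda_1$-equation by $x_{\lambda_2}(t)$ and the $\lambda_2$-equation by $x_{\lambda_1}(t)$, subtracting, and then summing $t$ from $a+1$ to $b-1$ yields
\begin{equation*}
 (\lambda_1-\lambda_2)\sum_{t=a+1}^{b-1} r(t)x_{\lambda_1}(t)x_{\lambda_2}(t)
 = \sum_{t=a+1}^{b-1}\Big[x_{\lambda_2}(t)\,{}^{ABC}_{a}\nabla^\alpha\!\big(p(t)\,{}^{ABR}\nabla_b^\alpha x_{\lambda_1}(t)\big)-x_{\lambda_1}(t)\,{}^{ABC}_{a}\nabla^\alpha\!\big(p(t)\,{}^{ABR}\nabla_b^\alpha x_{\lambda_2}(t)\big)\Big],
\end{equation*}
since the $q(t)x_{\lambda_1}(t)x_{\lambda_2}(t)$ contributions cancel.

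Next I would apply Theorem \ref{th9} to each sum on the right, taking $f=x_{\lambda_2}$, $g=p\cdot{}^{ABR}\nabla_b^\alpha x_{\lambda_1}$ in the first and the roles swapped in the second. This produces a discrete analog of the Lagrange identity of the form
\begin{equation*}
\sum_{t=a+1}^{b-1} x_{\lambda_2}(t)\,{}^{ABC}_{a}\nabla^\alpha\!\big(p(t)\,{}^{ABR}\nabla_b^\alpha x_{\lambda_1}(t)\big)= \sum_{t=a+1}^{b-1} p(t)\,{}^{ABR}\nabla_b^\alpha x_{\lambda_1}(t-1)\,{}^{ABR}\nabla_b^\alpha x_{\lambda_2}(t-1)+\text{boundary},
\end{equation*}
where the boundary part is
\begin{equation*}
\frac{B(\alpha)}{1-\alpha}\,p(t)\,\textbf{E}^{1}_{\overline{\alpha,1},\frac{-\alpha}{1-\alpha},b^-}x_{\lambda_2}(t)\,{}^{ABR}\nabla_b^\alpha x_{\lambda_1}(t)\Big|_{a}^{\,b-1},
\end{equation*}
mirroring the structure of (\ref{Obs}). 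The symmetric bulk sum is identical in the two expressions, hence it cancels under subtraction, leaving only the difference of boundary contributions evaluated at $t=a$ and $t=b-1$.

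Finally I would show that this boundary expression vanishes. At the endpoint $t=b-1$, the boundary condition (\ref{dBc2}) gives a linear dependence between $\textbf{E}^{1}_{\overline{\alpha,1},\frac{-\alpha}{1-\alpha},b^-}x_{\lambda_i}(b-1)$ and ${}^{ABR}\nabla_b^\alpha x_{\lambda_i}(b-1)$; because the $d_i$ do not both vanish, the $2\times 2$ determinant that is the coefficient of $p(b-1)$ in the boundary term must be zero. The analogous argument with (\ref{dBc1}) handles the endpoint $t=a$. Thus the right-hand side is zero, and since $\lambda_1\neq\lambda_2$ we divide out to conclude
\begin{equation*}
\sum_{t=a+1}^{b-1} r(t)x_{\lambda_1}(t)x_{\lambda_2}(t)=0.
\end{equation*}

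The main obstacle is bookkeeping: the discrete integration-by-parts formula in Theorem \ref{th9} introduces the shift $g(s-1)$ and ${}^{ABR}\nabla_b^\alpha f(s-1)$ rather than the clean pairing one has in the continuous case, so one must verify that, after swapping the roles of $x_{\lambda_1}$ and $x_{\lambda_2}$, the shifted bilinear term $p(t)\,{}^{ABR}\nabla_b^\alpha x_{\lambda_1}(t-1)\,{}^{ABR}\nabla_b^\alpha x_{\lambda_2}(t-1)$ really does appear symmetrically and therefore cancels in the subtraction. Once that symmetry is in hand, the rest — matching the boundary terms with (\ref{dBc1})–(\ref{dBc2}) — is essentially the same $2\times 2$ determinant argument used in the proof of Theorem \ref{sonra}.
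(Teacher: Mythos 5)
Your proposal is correct and is exactly the argument the paper intends: the published proof simply says it is ``similar to that of Theorem \ref{sonra}'' using Theorem \ref{th9} and the inner product (\ref{ip}), and you have filled in those details faithfully, including the key observation that the shifted bulk term is symmetric in $x_{\lambda_1},x_{\lambda_2}$ (so it cancels) and that the $2\times 2$ determinant at $t=a$ and $t=b-1$ vanishes by (\ref{dBc1})--(\ref{dBc2}). The only slip is cosmetic: the coefficient in the shifted bulk sum should be $p(s-1)$ rather than $p(t)$, since it comes from $g(s-1)$ with $g=p\cdot{}^{ABR}\nabla_b^\alpha x_{\lambda_1}$, which does not affect the cancellation.
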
 \label{dsonra}
\begin{proof}
The proof is similar to that of Theorem \ref{sonra}. However, it follows by making use of the discrete fractional integration by parts in Theorem \ref{th9} and through the use of the inner product defined in (\ref{ip}). The details are left to the reader.
\end{proof}

\begin{remark}
From the proofs of Theorem  \ref{EV} and Theorem \ref{dEV}, it follows that the  operators $~^{C}L_1$ and its discrete version $~^{dC}L_1$ are self-adjoint. Similarly, from Remark \ref{rem4},  we see that the discrete operator $~^{dC}L_2$ is also self-adjoint.
\end{remark}

\section{The higher order discrete fractional $SLE$ and an open problem}
In the previous section, the values of $\alpha$ in the discrete $SLE$ are  taken in the interval $(0,\frac{1}{2})$  in order to guarantee the convergence of the discrete Mittag-Leffler kernel used to define the $ABR$ and $ABC$ fractional differences. Therefore, the ordinary difference $SLE$ can not be obtained as  $\alpha\rightarrow 1^-$. For this purpose, we shall recommend for a discrete fractional $SLE$ with the values of $\alpha\in(1,\frac{3}{2})$ so that the ordinary difference case can be obtained as $\alpha\rightarrow 1^+$.
For the main concepts regarding to higher order fractional calculus with discrete Mittag-Leffler kernel, we refer to \cite{Th EPJ}. For  higher order fractional operators with non-singular Mittag-Leffler kernels, we refer the reader to \cite{JIA Lyapunov}, where a Lyapunov type inequality was formulated for a $BVP$ of order $2<\alpha<3$ and the ordinary Lyapunov inequality was obtained as $\alpha\rightarrow 2^+$. Using Definition 2.1. in \cite{Th EPJ} and Definition \ref{def15}, Lemma \ref{dTR},  and Remark \ref{rem2} in our paper that if $\alpha \in(1,2),$ $\beta =\alpha-1 \in (0,1)$,
$\lambda_\beta=\frac{-\beta}{1-\beta}=-\frac{\alpha-1}{2-\alpha}$ and $|\lambda_\beta|<1$ if $\alpha \in (1,\frac{3}{2})$, then the following four results hold:
\begin{equation*}\label{w1}
  ~^{ABC}_{a}\nabla^\alpha f(t)=~^{ABC}_{a}\nabla^\beta \nabla f(t)=\frac{B(\alpha-1)}{2-\alpha}\textbf{ E}^1_{\overline{\beta,1}, \lambda_\beta,a^+}\nabla^2 f(t),
\end{equation*}
\begin{equation*}\label{w2}
  ~^{ABR}_{a}\nabla^\alpha f(t)=~^{ABR}_{a}\nabla^\beta \nabla f(t)=\frac{B(\alpha-1)}{2-\alpha}\nabla \textbf{ E}^1_{\overline{\beta,1}, \lambda_\beta,a^+}\nabla f(t),
\end{equation*}

\begin{equation*}\label{w1}
  ~^{ABC}\nabla_b^\alpha f(t)=~^{ABC}\nabla_b^\beta (-\Delta f)(t)=\frac{B(\alpha-1)}{2-\alpha}\textbf{ E}^1_{\overline{\beta,1}, \lambda_\beta,b^-}\Delta^2 f(t),
\end{equation*}
\begin{equation*}\label{w2}
  ~^{ABR}\nabla_b^\alpha f(t)=~^{ABR}\nabla_b^\beta (-\Delta f)(t)=\frac{B(\alpha-1)}{2-\alpha}\Delta \textbf{ E}^1_{\overline{\beta,1}, \lambda_\beta,b^-}\Delta f(t).
\end{equation*}

\indent
\begin{remark}\label{finalrem}
Notice that the ordinary $SLE$ can be obtained in the non-discrete case as $\alpha\rightarrow 1^-$. In fact, since non-discrete Mittag-Leffler kernels do not have a convergence problem, the $ABR$ and $ABC$ fractional operators become well-defined  for any $\alpha \in (0,1]$. For that reason, we present the following open problem in the discrete case.
\end{remark}
\textbf{Open problem:}
Depending on the above discussion, can we present integration by parts formulas for the $ABR$  and $ABC$ fractional differences of order $\alpha \in (1,\frac{3}{2})$ which will be used to prove the self-adjointness, eigenvalue, and eigenfunction properties of suitable fractional difference $SLEs$ with proper boundary conditions?

\section{Acknowledgements}
This study was supported by The Scientific and Technological Research Council of Turkey while the first author visiting the University of Nebraska-Lincoln. The second author would like to thank Prince Sultan University for funding this work through the research group
Nonlinear Analysis Methods in Applied Mathematics (NAMAM) group number RG-DES-2017-01-17.


\begin{thebibliography}{99}
  \bibitem{Samko} G. Samko, A.A. Kilbas, S. Marichev, Fractional Integrals
and Derivatives: Theory and Applications, Gordon and Breach, Yverdon, 1993.
  \bibitem{Podlubny} I. Podlubny, Fractional Differential Equations, Academic Press, New York, 1999.
\bibitem{Kilbas} A.A. Kilbas, M.H. Srivastava, J.J. Trujillo, Theory and Application of Fractional Differential Equations, North Holland Mathematics Studies 204, 2006.

\bibitem{Magin}  R.L. Magin, Fractional Calculus in Bioengineering, Begell House Publishers, 2006.

\bibitem{Dumitru} D. Baleanu, K. Diethelm, E. Scalas, J.J. Trujillo, Fractional Calculus Models and Numerical Methods (Series on Complexity, Nonlinearity and Chaos), World Scientific, 2012.

\bibitem{Nabla} F.M. At{\i}c{\i}, P.W. Eloe, Discrete fractional
calculus with the nabla operator, Electron. J. Qual. Theory
 Differ. Equ., Spec. Ed. I {\bf No.3} (2009),1--12.

\bibitem{ThFer} T. Abdeljawad, F.M. At{\i}c{\i}, On the definitions of nabla fractional operators, Abstr. Appl. Anal., {\bf 2012} (2012), Article ID 406757, 13 pages.

\bibitem{dualCaputo} T. Abdeljawad,  On Delta and Nabla Caputo fractional differences and dual identities, Discrete Dyn. Nat. Soc., {\bf 2013} (2013), Article ID 406910, 12 pages.
 \bibitem{dualR} T. Abdeljawad, Dual identities in fractional difference calculus within Riemann, Adv. Differ. Equ., {bf 2013}:36 (2013).
\bibitem{Thsemi}T. Abdeljawad,   F. Jarad, D. Baleanu,  A semigroup-like property for discrete Mittag-Leffler functions, Adv. Differ. Equ., {\bf 2012}:72 (2012).


\bibitem{ThCaputo} T. Abdeljawad, On Riemann and Caputo fractional differences,
Comput. Math. Appl., {\bf 62} No.3 (2011), 1602--1611.


\bibitem{Ferd} F.M. At{\i}c{\i}, P.W. Eloe,  A Transform method in
discrete fractional calculus, International Journal of
Difference Equations,  {\bf 2 } No.2 (2007), 165--176.

\bibitem{Feri} F.M.  At{\i}c{\i}, P.W. Eloe,  Initial value problems in
discrete fractional calculus, P. Amer. Math. Soc., {\bf 137} No.3 (2009), 981--989.


\bibitem{Atmodel} F.M. At{\i}c{\i},  S. \c{S}eng\"{u}l , Modelling with fractional difference equations, J. Math. Anal. Appl., {\bf 369} No.1 (2010), 1--9.

\bibitem{Gronwall} F. M. At{\i}c{\i}, P.W. Eloe, Gronwall's inequality on discrete fractional calculus, Comput. Math. Appl., {\bf 64} No.10 (2012), 3193--3200.

\bibitem{Miller} K.S. Miller,  B. Ross, Fractional difference calculus, Proceedings of the International Symposium on Univalent Functions, Fractional Calculus and Their Applications, Nihon University, Koriyama, Japan, (1989), 139--152.

\bibitem{TDbyparts}T. Abdeljawad, D. Baleanu, Fractional differences and integration by parts, J. Comput. Anal.  Appl., {\bf 13} No.3(2011), 574--582.

\bibitem{Nuno} N.R.O. Bastos, R.A.C. Ferreira, D.F.M. Torres, Discrete-time fractional variational problems, Signal Process.,  {\bf91} No.3 (2011), 513--524.


\bibitem{Gray} H.L. Gray, N.F. Zhang, On a new definition of the fractional difference, Math. Comp., {\bf 50} (1988), 513--529.

\bibitem{CP}C. Goodrich, Allan C. Peterson, Discrete fractional calculus, Springer, 2015.
\bibitem{Abdon} A. Atangana, D. Baleanu, New fractional derivatives with non-local and non-singular kernel:Theory and application to heat transfer model, Therm. Sci., {\bf 20} No.2 (2016), 763--769.

\bibitem{FCaputo} M. Caputo, M. Fabrizio, A new definition of fractional derivative without singular kernal, Progr. Fract. Differ. Appl., {\bf 1} No.2 (2015), 73--85.

\bibitem{Losada} J. Losada, J.J. Nieto, Properties of a new fractional derivative without singular kernel, Progr. Fract. Differ. Appl., {\bf 1} No.2 (2015), 87--92.

\bibitem{Thabet1} T. Abdeljawad, D. Baleanu, Integration by parts and its applications of a new nonlocal fractional derivative with Mittag-Leffler nonsingular kernel, J. Nonlinear Sci. Appl., {\bf10} (2017), 1098--1107.

 \bibitem{ThabetROMP} T. Abdeljawad, D. Baleanu, On fractional derivatives with exponential kernel and their discrete versions, Rep. Math. Phys., {\bf 80} No.1  (2017), 11--27.
 
 \bibitem{CF} M. Caputo, M. Fabrizio, Applications of new time and spatial fractional derivatives with exponential kernels, Progr. Fract. Differ. Appl., {\bf 2} No.1 (2016), 1--11.


\bibitem{JIALyapunov}  T. Abdeljawad, A Lyapunov type inequality for fractional operators with nonsingular Mittag-Leffler kernel, J.  Inequal.  Appl., {\bf 2017}:130 (2017).

\bibitem{ADELyapunov} T. Abdeljawad, Fractional operators with exponential kernels and a Lyapunov type inequality, Adv. Differ. Equ., {\bf 2017}:313 (2017).

\bibitem{CF1} A. Atangana, D. Baleanu,  Caputo-Fabrizio derivative applied to groundwater flow within confined aquifer, J. Eng. Mech., {\bf 143} No. 5 (2017).

\bibitem{CF2} B.S.T. Alkahtani, Chua's circuit model with Atangana-Baleanu derivative with fractional order, Chaos Soliton. Fract., {\bf 89} (2016), 547--551.

\bibitem{CF3} A. Atangana, I. Koca, Chaos in a simple nonlinear system with Atangana-Baleanu derivatives with fractional order, Chaos Soliton. Fract., {\bf 89} (2016), 447--454.

\bibitem{TQCAM} T. Abdeljawad, Q. M. Al-Mdallal,  Discrete Mittag-Leffler kernel type fractional difference initial value problems and Gronwall's inequality, J. Comput. Appl. Math., DOI 10.1016/j.cam.2017.10.021.

 \bibitem{Thabet2} T. Abdeljawad, D. Baleanu, Discrete fractional differences with nonsingular discrete Mittag-Leffler kernels, Adv. Differ. Equ.,  {\bf 2016}:232 (2016).

\bibitem{ADEMonot} T. Abdeljawad, D. Baleanu,  Monotonicity results for fractional difference operators with discrete exponential kernels, Adv. Differ. Equ. {\bf 2017}:78 (2017).

 \bibitem{ChaosMonot} T. Abdeljawad, D. Baleanu,  Monotonicity analysis of a nabla discrete fractional operator with discrete Mittag-Leffler kernel, Chaos Soliton. Fract., {\bf 102} (2017), 106--110. 
 
 \bibitem{DDNSLyapunov} T. Abdeljawad, Q.M. Al-Mdallal, M.A. Hajji,  Arbitrary order fractional difference operators with discrete exponential kernels and applications, Discrete Dyn. Nat.  Soc., {\bf 2017} (2017), Article ID 4149320, 8 pages.

\bibitem{ThEPJ} T. Abdeljawad, F. Madjidi, A Lyaponuv inequality for fractional difference operators with discrete Mittag-Leffler kernel of order $2\leq \alpha <5/2$, Eur. Phys. J., to appear.

\bibitem{Zettl} A. Zettl,  Sturm-Liouville Theory, Mathematical Surveys and Monographs vol. 121, American Mathematical Society, 2005.

\bibitem{Boyce} W.E. Boyce, R.C. DiPrima, Elementary differential equations and boundary value problems, John Wiley and Sons, USA, 2005.
 
 \bibitem{Rivero}M. Rivero, J.J. Trujillo, M.P. Velasco, A fractional approach to the Sturm-Liouville problem, Cent. Eur. J. Phys., {\bf11}(10) (2013), 1246-1254.

\bibitem{Klimek} M. Klimek, O.P. Agrawal, Fractional Sturm-Liouville problem, Comput.  Math. Appl., {\bf 66} (2013), 795--812.

  \bibitem{Antonyetal} A.A. Kilbas, M. Saigo, R.K. Saxena, Generalized Mittag-Leffler function and generalized fractional calculus operators, Integr. Transf. Spec. F.,  {\bf  15} No.1 (2010), 31--49.

\end{thebibliography}
\end{document}